\newtheorem{theorem}{Theorem}[section]
\newtheorem{lemma}[theorem]{Lemma}
\theoremstyle{definition}
\newtheorem{example}[theorem]{Example}
\numberwithin{equation}{section}
\renewcommand{\leq}{\leqslant}
\renewcommand{\geq}{\geqslant}
\begin{document}
\title[On flag-transitive $2$-designs]{On flag-transitive $2$-$(k^{2},k,\lambda )$ designs with $\lambda \mid k$.}

\author[]{ Alessandro Montinaro and Eliana Francot}

\thanks{Corresponding author: Alessandro Montinaro}

\address{Alessandro Montinaro and Eliana Francot, Dipartimento di Matematica e Fisica “E. De Giorgi”, University of Salento, Lecce, Italy}
\email{alessandro.montinaro@unisalento.it}
\email{eliana.francot@unisalento.it}

\subjclass[MSC 2020:]{05B05; 05B25; 20B25}%
\keywords{ $2$-design; automorphism group; flag-transitive}
\date{\today}%

\begin{abstract}
It is shown that, apart from the smallest Ree group, a flag-transitive automorphism group $G$ of a $2$-$(k^{2},k,\lambda )$ design $\mathcal{D}$, with $\lambda \mid k$, is either an affine group or an
almost simple classical group. Moreover, when $G$ is the smallest Ree group, $\mathcal{D}$ is isomorphic either to the $2$-$(6^{2},6,2)$ design or to one of the three $2$-$(6^{2},6,6)$ designs constructed in this paper. All the four $%
2$-designs have the $36$ secants of a nondegenerate conic $\mathcal{C}$ of $%
PG_{2}(8)$ as a point set and $6$-sets of secants in a remarkable
configuration as a block set.




\end{abstract}

\maketitle

\section{Introduction and Main Result}

A $2$-$(v,k,\lambda )$ \emph{design} $\mathcal{D}$ is a pair $(\mathcal{P},%
\mathcal{B})$ with a set $\mathcal{P}$ of $v$ points and a set $\mathcal{B}$
of blocks such that each block is a $k$-subset of $\mathcal{P}$ and each two
distinct points are contained in $\lambda $ blocks. We say $\mathcal{D}$ is 
\emph{nontrivial} if $2<k<v$. All $2$-$%
(v,k,\lambda )$ designs in this paper are assumed to be nontrivial. An
automorphism of $\mathcal{D}$ is a permutation of the point set which
preserves the block set. The set of all automorphisms of $\mathcal{D}$ with
the composition of permutations forms a group, denoted by $\mathrm{Aut(%
\mathcal{D})}$. For a subgroup $G$ of $\mathrm{Aut(\mathcal{D})}$, $G$ is
said to be \emph{point-primitive} if $G$ acts primitively on $\mathcal{P}$,
and said to be \emph{point-imprimitive} otherwise. A \emph{flag} of D is a
pair $(x,B)$ where $x$ is a point and $B$ is a block containing $x$. If $%
G\leq \mathrm{Aut(\mathcal{D})}$ acts transitively on the set of flags of $%
\mathcal{D}$, then we say that $G$ is \emph{flag-transitive} and that $%
\mathcal{D}$ is a \emph{flag-transitive design}.\ 

The $2$-$(v,k,\lambda )$ designs $\mathcal{D}$ admitting a flag-transitive
automorphism group $G$ have been widely studied by several authors. In 1990,
a classification of those with $\lambda =1$ and $%
G\nleq A\Gamma L_{1}(q)$ was announced by Buekenhout, Delandtsheer, Doyen,
Kleidman, Liebeck and Saxl in \cite{BDDKLS} and proven in \cite{BDD}, \cite%
{Da}, \cite{De0}, \cite{De}, \cite{Kle}, \cite{LiebF} and \cite{Saxl}. Since
then a special attention was given to the case $\lambda >1$. A
classification of the flag-transitive $2$-designs with $\gcd (r,\lambda )=1$%
, $\lambda >1$ and $G\nleq A\Gamma L_{1}(q)$, where $r$ is the replication
number of $\mathcal{D}$, has been announced by Alavi, Biliotti, Daneshkakh,
Montinaro, Zhou and their collaborators in \cite{glob} and proven in \cite{A}%
, \cite{A1}, \cite{ABD0}, \cite{ABD}, \cite{BM}, \cite{BMR}, \cite{MBF}, 
\cite{TZ}, \cite{Zie}, \cite{ZD}, \cite{ZZ0}, \cite{ZZ1}, \cite{ZZ2}, \cite%
{ZW} and \cite{ZGZ}. Moreover, recently the flag-transitive $2$-designs with 
$\lambda =2$ have been investigated by Devillers, Liang, Praeger and Xia in 
\cite{DLPX}, where it is shown that apart from the two known symmetric $2$-$%
(16,6,2)$ designs, $G$ is primitive of affine or almost simple type.
Moreover, a classification is provided when the socle of $G$ is isomorphic
to $PSL_{n}(q)\trianglelefteq G$ and $n\geq 3$.

The present paper represents a further contribution to the study of the
flag-transitive $2$-designs. More precisely, the flag-transitive $2$-$%
(k^{2},k,\lambda )$ designs with $\lambda \mid k$ are investigated. The reason of studying such $2$-designs is that they represent a natural generalization of the affine planes in
terms of parameters, and also because, it is shown in \cite{Monty} that, the blocks of imprimitivity of a family of flag-transitive, point-imprimitive symmetric $2$-designs investigated in \cite{PZ} have the structure of the $2$-designs analyzed here. The following result is obtained:

\begin{theorem}
\label{main}Let $\mathcal{D}$ be a $2$-$(k^{2},k,\lambda )$, with $\lambda
\mid k$, admitting a flag-transitive automorphism group $G$. Then $G$ is
point primitive and one of the following holds:

\begin{enumerate}
\item $G$ is an affine group.

\item $G$ is an almost simple classical group.

\item $\mathcal{D}$ is isomorphic to the $2$-$(36,6,2)$ design constructed
in Example \ref{Ex1} and $^{2}G_{2}(3)^{\prime }\trianglelefteq G\leq $ $%
^{2}G_{2}(3)$.

\item $\mathcal{D}$ is isomorphic to one of the three $2$-$(36,6,6)$ designs
constructed in Example \ref{Ex1} and $G\cong $ $^{2}G_{3}(3)$.
\end{enumerate}
\end{theorem}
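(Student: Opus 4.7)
The plan falls into three phases. Phase one is to derive the numerical parameters of $\mathcal{D}$ and establish point-primitivity. The standard design identity $r(k-1) = \lambda(v-1)$ with $v = k^{2}$ immediately gives
\[
r = \lambda(k+1), \qquad b = \lambda k(k+1),
\]
and flag-transitivity forces $r \mid |G_{x}|$ for any point $x$ and $k \mid |G_{B}|$ for any block $B$. To rule out point-imprimitivity, I would assume $G$ preserves a nontrivial partition of $\mathcal{P}$ into $d$ classes of size $c$ with $cd = k^{2}$ and $c, d \geq 2$, then apply the Delandtsheer--Doyen inequality, which bounds $c$ and $d$ in terms of $k$. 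Combined with $c \mid k^{2}$ and the divisibility $\lambda \mid k$, the list of admissible pairs $(c, d)$ becomes short, and each surviving case is eliminated by a block--class intersection count on the multiset $(|B \cap \Delta_{1}|, \dots, |B \cap \Delta_{d}|)$ for a block $B$, exploiting flag-transitivity to control these intersection sizes.

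Phase two applies the O'Nan--Scott theorem to the now-primitive action of $G$. If $\Soc(G)$ is elementary abelian, $G$ is affine and we land in conclusion (1). If $\Soc(G) = T^{n}$ with $T$ non-abelian simple and $n \geq 2$, we must be in one of the product-action (PA), simple-diagonal (SD, HS), compound-diagonal (CD, HC), or twisted-wreath (TW) types; each imposes strong arithmetic constraints on $v = k^{2}$. The diagonal and twisted-wreath types, which require $v = |T|^{n-1}$ or an analogous expression, are essentially never compatible with $v$ being a perfect square together with $r = \lambda(k+1)$ dividing $|G_{x}|$. The product-action case, $v = m^{n}$, is the most delicate: writing $k = m^{n/2}$ when $n$ is even, one uses the block-stabiliser structure inherited from the base group, together with $\lambda \mid k$, to exclude all but small parameter values that are then checked directly.

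Phase three restricts the socle $T$ in the almost simple case. For $T$ alternating, the primitive actions of square degree are very restricted and none supports the required flag-transitive $2$-design with $\lambda \mid k$. For $T$ sporadic, direct inspection of the \textsc{Atlas} list of primitive representations of square degree suffices. For $T$ of exceptional Lie type, the primitive maximal subgroups are tabulated, and a case analysis shows that the only square degree supporting a flag-transitive $2$-$(k^{2}, k, \lambda)$ design with $\lambda \mid k$ is $v = 36$ for the Ree group $^{2}G_{2}(3)$; here the action is on the $36$ secants of a nondegenerate conic of $\PG_{2}(8)$ and yields precisely the four designs of Example \ref{Ex1} (conclusions (3) and (4)). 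The remaining possibility, $T$ classical, is conclusion (2), which is not further restricted at this stage.

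The principal obstacle is the exceptional Lie-type analysis in Phase three: the small-rank Ree group cannot be handled by pure order arithmetic, and an explicit analysis of its orbits on $6$-subsets of the $36$ secants is needed both to identify the four designs and to distinguish the three $2$-$(36,6,6)$ designs up to isomorphism. A secondary difficulty is the PA-type elimination in Phase two, where the residual small cases must be treated individually rather than by a uniform bound.
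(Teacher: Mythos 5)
Your outline reproduces the paper's architecture (primitivity, O'Nan--Scott, then a socle-by-socle analysis culminating in the conic geometry of $PG_2(8)$ for ${}^2G_2(3)$), but it omits the one lemma that drives every elimination in the actual proof: for any point $x$, any point $y\neq x$ and any block $B$ through $x$, the pair $(y^{G_x},B^{G_x})$ is a tactical configuration, so $|y^{G_x}|=(k+1)|B\cap y^{G_x}|$; in particular $k+1=r/\lambda$ divides the length of \emph{every} nontrivial $G_x$-orbit. Your substitute constraints --- ``$v$ a perfect square together with $r=\lambda(k+1)$ dividing $|G_x|$'' --- are not strong enough to carry Phase two. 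In simple diagonal type with $m$ odd, $v=|T|^{m-1}$ is automatically a square, so the square condition is vacuous there; and in the product-action case $v=v_0^m$ the divisor $k+1=v_0^{m/2}+1$ need not be coprime to $|K_\gamma|^m\, m!$, so $r\mid |G_x|$ yields essentially nothing on its own. The paper instead uses the subdegree divisibility to force $k+1\leq m|T|$ (diagonal and twisted wreath types) and, for product action, combines it with the rank $s$ of the base group to get $v_0^{m/2}<m v_0$, reducing to $m=s=2$, where the rank-$3$ subdegrees $2(k-1)$ and $(k-1)^2$ kill the case via $k+1\mid 2(k-1)$. The same lemma is what filters the sporadic groups, bounds the alternating case (through $k+1$ dividing subdegrees such as $2n-4$ and $\binom{n-2}{2}$), and replaces Alavi's criterion $r^2>v$ by $(r/\lambda)^2>k^2$ in the exceptional Lie type analysis. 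Without stating and exploiting it, your Phases two and three do not close.

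Two smaller points. Point-primitivity needs none of the Delandtsheer--Doyen machinery you propose: since $r=(k+1)\lambda>(k-3)\lambda$, it follows in one line from Dembowski's 2.3.7(c). And in Phase three you should account for $G_2(2)'\cong PSU_3(3)$, which also has a primitive representation of degree $36$ and survives the exceptional-type reduction alongside ${}^2G_2(3)'$; the paper needs a separate and genuinely nontrivial lemma to show it supports no such design, a case your sketch passes over entirely.
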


Actually, (3) and (4) are special cases of (2), since $^{2}G_{3}(3)\cong
P\Gamma L_{2}(8)$. It worth noting that the example in (3) is not contained
in \cite{DLPX} and hence it is presumably new. A complete
classification of (1) for $G\nleq A\Gamma L_{1}(q)$, and of (2) are
contained in \cite{Mo1} and \cite{Mo2} respectively.

\section{Preliminary Reductions}

We first collect some useful results on flag-transitive designs.

\begin{lemma}
Let $\mathcal{D}$ be a $2$-$(k^{2},k,\lambda )$ design and let $b$ be the
number of blocks of $\mathcal{D}$. Then the number of blocks containing each
point of $\mathcal{D}$ is a constant $r$ satisfying the following:

\begin{enumerate}
\item $r=\lambda (k+1)$;

\item $b=\lambda k(k+1)$;

\item $\left( r/\lambda \right) ^{2}>k^{2}$.
\end{enumerate}
\end{lemma}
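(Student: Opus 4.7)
The plan is to derive all three items by invoking the two standard counting identities for a $2$-$(v,k,\lambda)$ design, namely $r(k-1)=\lambda(v-1)$ and $bk=vr$, and then substituting $v=k^{2}$.

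First, I would recall that in any $2$-$(v,k,\lambda)$ design the number of blocks through a given point $p$ is independent of $p$: fixing $p$ and double-counting the pairs $(q,B)$ with $q\neq p$ and $p,q\in B$, each block through $p$ contributes $k-1$ such pairs, while each of the remaining $v-1$ points contributes $\lambda$. This yields $r_{p}(k-1)=\lambda(v-1)$, so $r_{p}$ is a constant $r$. Substituting $v=k^{2}$ gives
\[
r(k-1)=\lambda(k^{2}-1)=\lambda(k-1)(k+1),
\]
and since $k>2>1$ we may cancel $k-1$ to obtain $r=\lambda(k+1)$, proving (1).

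For (2), I would use the second standard identity, obtained by double-counting incident point-block pairs: $bk=vr$. Inserting $v=k^{2}$ and the value of $r$ from (1) gives $bk=k^{2}\cdot\lambda(k+1)$, hence $b=\lambda k(k+1)$.

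Finally, (3) is immediate from (1): $r/\lambda=k+1$, so $(r/\lambda)^{2}=(k+1)^{2}=k^{2}+2k+1>k^{2}$ since $k\geq 2$. There is no real obstacle here; the lemma is a direct substitution into the standard design equations, and the only thing to be careful about is the implicit use of $k>1$ when cancelling the factor $k-1$, which is guaranteed by the non-triviality assumption $2<k<v$ made at the outset of the paper.
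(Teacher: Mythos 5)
Your proof is correct and is exactly the standard argument the paper implicitly relies on: the lemma is stated without proof there, being a direct substitution of $v=k^{2}$ into the identities $r(k-1)=\lambda(v-1)$ and $bk=vr$. Your attention to the cancellation of $k-1$ (justified by nontriviality, $k>2$) is the only point of care, and you handle it properly.
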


\bigskip

\begin{lemma}
\label{PP}If $\mathcal{D}$ is a $2$-$(k^{2},k,\lambda )$ design, with $%
\lambda \mid k$, admitting a flag-transitive automorphism group $G$, then
the following hold:

\begin{enumerate}
\item $G$ acts point-primitively on $\mathcal{D}$.

\item If $x$ is any point of $\mathcal{D}$, then $G_{x}$ is a large subgroup
of $G$.

\item $\left\vert y^{G_{x}}\right\vert =(k+1)\left\vert B\cap
y^{G_{x}}\right\vert $ for any point $y$ of $\mathcal{D}$, with $y\neq x$,
and for any block $B$ of $\mathcal{D}$ incident with $x$. In particular, $k+1
$ divides the length of each point-$G_{x}$-orbit on $\mathcal{D}$ distinct
from $\left\{ x\right\} $.
\end{enumerate}
\end{lemma}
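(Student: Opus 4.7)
My plan is to establish (3) first by a short double counting, then use it to prove (1), and finally derive (2) from a direct size bound; this reverses the order of the statement but matches the logical dependency. For (3), I would fix a flag $(x,B)$ and a point $y\neq x$, set $O=y^{G_x}$, and double count the pairs $(z,B')$ with $z\in O$ and $x,z\in B'$. Counting by $z$ gives $\lambda|O|$, since each pair of distinct points lies in $\lambda$ blocks. Counting by $B'$ uses flag-transitivity: $G_x$ is transitive on the $r=\lambda(k+1)$ blocks through $x$ and stabilises $O$, so $|B'\cap O|$ is constant as $B'$ varies over blocks through $x$, yielding $r\,|B\cap O|$. Equating the two expressions gives (3), and in particular $(k+1)\mid|O|$ for every non-trivial point-$G_x$-orbit.

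For (1), I would argue by contradiction. Suppose $G$ preserves a non-trivial partition of $\mathcal{P}$ into $c$ classes of size $d$, with $cd=k^{2}$ and $1<d<k^{2}$, and let $\Sigma$ denote the class containing $x$. Since $G_x\leq G_\Sigma$, the set $\Sigma\setminus\{x\}$ is a union of non-trivial point-$G_x$-orbits, so (3) forces $d\equiv 1\pmod{k+1}$. From $k\equiv -1\pmod{k+1}$ one has $k^{2}\equiv 1\pmod{k+1}$, and then $cd=k^{2}$ forces $c\equiv 1\pmod{k+1}$ as well. Both $c$ and $d$ are at least $2$ and congruent to $1$ modulo $k+1$, hence $c,d\geq k+2$, giving $k^{2}=cd\geq(k+2)^{2}>k^{2}$, a contradiction.

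Finally, (2) follows from the observation that $G_x$ acts transitively on the $r=\lambda(k+1)$ blocks through $x$, so $|G_x|\geq\lambda(k+1)>k$; combined with $|G|=k^{2}|G_x|$ this yields $|G_x|^{3}>k^{2}|G_x|=|G|$, showing that $G_x$ is a large subgroup of $G$. No step is a genuine obstacle: the argument reduces to a double count, a congruence squeeze, and a one-line size bound. The only place where some care is needed is the symmetric treatment of $c$ and $d$ in (1), which hinges on the identity $k^{2}\equiv 1\pmod{k+1}$; without exploiting this symmetry one would obtain only $d\geq k+2$, which alone is not enough to contradict $cd=k^{2}$.
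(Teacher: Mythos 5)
Your proposal is correct. Parts (2) and (3) coincide with the paper's argument: your double count of pairs $(z,B')$ with $z\in O$ and $x,z\in B'$ is exactly what the paper extracts from Dembowski's observation that $(y^{G_x},B^{G_x})$ is a tactical configuration, giving $\lambda\left\vert y^{G_x}\right\vert = r\left\vert B\cap y^{G_x}\right\vert$ and hence the stated identity; and your bound $\left\vert G_x\right\vert\geq\lambda(k+1)>k$ combined with $\left\vert G\right\vert=k^2\left\vert G_x\right\vert$ is the paper's proof of largeness verbatim. The genuine divergence is in part (1): the paper disposes of point-primitivity in one line by citing Dembowski 2.3.7.c, using only that $r=(k+1)\lambda>(k-3)\lambda$, whereas you derive it from scratch out of part (3) -- the class $\Sigma$ containing $x$ is $\{x\}$ together with nontrivial $G_x$-orbits, so $d\equiv 1\pmod{k+1}$, and the identity $k^2\equiv 1\pmod{k+1}$ transfers the same congruence to $c$, whence $c,d\geq k+2$ and $cd\geq(k+2)^2>k^2$. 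Your version is self-contained and makes visible exactly which feature of the parameters ($r/\lambda=k+1$ together with $v=k^2$) forces primitivity; the paper's is shorter but opaque without the reference. Your closing remark is well taken: the one-sided bound $d\geq k+2$ alone gives only $c\leq k^2/(k+2)<k$, which is not yet a contradiction, so the symmetric congruence on $c$ is genuinely needed.
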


\begin{proof}
The assertion (1) follows from \cite{Demb}, 2.3.7.c, since $r=(k+1)\lambda
>(k-3)\lambda $.

The flag-transitivity of $G$ on $\mathcal{D}$ implies $\left\vert
G\right\vert =k^{2}\left\vert G_{x}\right\vert $, $\left\vert
G_{x}\right\vert =\lambda \left( k+1\right) \left\vert G_{x,B}\right\vert $
and hence $\left\vert G\right\vert <\left\vert G_{x}\right\vert ^{3}$, which
is the assertion (2).

Let $y$ be any point of $\mathcal{D}$, $y\neq x$, and $B$ be any block of $%
\mathcal{D}$ incident with $x$. Since $(y^{G_{x}},B^{G_{x}})$ is a tactical
configuration by \cite{Demb}, 1.2.6, it follows that $\left\vert
y^{G_{x}}\right\vert \lambda =r\left\vert B\cap y^{G_{x}}\right\vert $.
Hence $\left\vert y^{G_{x}}\right\vert =(k+1)\left\vert B\cap
y^{G_{x}}\right\vert $ as $r=(k+1)\lambda $. This proves (3).
\end{proof}

\bigskip

The group $G$ is point-primitive on $\mathcal{D}$ by Lemma \ref{PP}(1). The
O'Nan-Scott Theorem classifies primitive groups into five types: (i) Affine
type; (ii) Almost simple type; (iii) Simple diagonal type; (iv) Product
type; (v) Twisted wreath product type (see \cite{LPS} for details). Hence,
the first part of the paper is devoted to prove that only families (i) and
(ii) occur. The result is achieved by adapting the techniques developed in 
\cite{ZZ} to the $2$-designs investigated here.

\begin{lemma}
\label{NoSDT}$G$ is not of simple diagonal type.
\end{lemma}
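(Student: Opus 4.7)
Suppose, for contradiction, that $G$ is of simple diagonal type. Then $\Soc(G) = T^n$ for some nonabelian finite simple group $T$ and integer $n\geq 2$, with $v = k^2 = |T|^{n-1}$ and
\[
T^n \leq G \leq T^n.(\Out(T)\times S_n),
\]
so $|G_x|\leq |T|\cdot|\Out(T)|\cdot n!$.

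The first step is to exploit the large-subgroup condition in Lemma~\ref{PP}(2), namely $v^{3}<|G|^{2}$. Substituting $v=|T|^{n-1}$ and the bound $|G|\leq |T|^{n}\cdot|\Out(T)|\cdot n!$ reduces this to
\[
|T|^{\,n-3}<(n!\,|\Out(T)|)^{2}.
\]
Combined with the CFSG bound $|\Out(T)|\leq c\log|T|$ (for an explicit small constant $c$) and the fact that the smallest nonabelian simple group has order $60$, this forces $n\leq 5$, with $n\in\{4,5\}$ permitted only for a handful of very small $T$. Hence we are left with a short, explicit list of candidate pairs $(T,n)$.

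The second step is to derive a contradiction from the divisibility condition of Lemma~\ref{PP}(3): every nontrivial $G_x$-orbit on $\mathcal{P}$ has length divisible by $k+1 = |T|^{(n-1)/2}+1$. For $n\geq 3$ I would identify $\mathcal{P}$ with the coset space $T^n/\mathrm{diag}(T)$ and consider the $G_x$-orbit of the coset of $(t,1,\ldots,1)$ with $t$ chosen in the shortest nontrivial $T$-conjugacy class. This orbit has length at most $|t^{T}|\cdot|\Out(T)|\cdot n$, and in each residual pair $(T,n)$ from the previous step this quantity is strictly less than $|T|^{(n-1)/2}+1$, violating Lemma~\ref{PP}(3). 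The case $n=2$ requires a separate treatment: here $v=|T|$ must be a perfect square, and a direct inspection of the CFSG list of orders shows this is impossible for any nonabelian finite simple group.

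The principal obstacle is the borderline sub-cases of Step~2 at $n=3$ with $T$ of moderate size, where the crude bound $|t^{T}|\cdot|\Out(T)|\cdot n$ on the subdegree does not obviously drop below $k+1=|T|+1$. In such situations I would follow the sharper subdegree analysis developed in \cite{ZZ}, which pins down the exact $G_x$-orbit structure on $T^n/\mathrm{diag}(T)$ closely enough to either exhibit a subdegree smaller than $k+1$ or force a conflict with the divisibility requirement, thereby excluding the simple diagonal type in all cases.
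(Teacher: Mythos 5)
Your Step 1 contains a genuine error. The large-subgroup inequality $v^{3}<|G|^{2}$ with $v=|T|^{n-1}$ and $|G|\leq |T|^{n}\,|\Out(T)|\,n!$ gives $|T|^{\,n-3}<(n!\,|\Out(T)|)^{2}$, but this does \emph{not} bound $n$: the right-hand side grows super-exponentially in $n$ while the left-hand side grows only exponentially (at base $|T|\geq 60$). For $T=A_{5}$ the inequality $60^{\,n-3}<4(n!)^{2}$ holds for \emph{every} $n\geq 2$ (e.g.\ $n=6$: $216{,}000<2{,}073{,}600$; and the ratio of consecutive terms of $(n!)^{2}/60^{\,n-3}$ is $n^{2}/60>1$ for $n\geq 8$). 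So your ``short, explicit list of candidate pairs $(T,n)$'' does not exist, and Step 2, which is phrased as a check of the residual pairs from Step 1, has nothing finite to check. The paper avoids this by going straight to the subdegree bound: the $G_x$-orbit meeting the $T_1$-orbit $\Gamma_1=x^{T_1}$ has length at most $m|T|$ (linear in $m$, not factorial), and comparing with $k+1>|T|^{(m-1)/2}$ forces $60^{m-3}\leq|T|^{m-3}<m^{2}$, hence $m\leq 3$. Your own Step 2 inequality $|T|^{(n-1)/2}<|t^{T}|\,|\Out(T)|\,n\leq |T|\,|\Out(T)|\,n$ would in fact repair the gap if you used it to bound $n$ directly rather than only on a residual list, so the fix is cheap, but as written the logic is broken.

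A second, smaller issue: your treatment of the surviving cases (notably $n=3$) defers to ``the sharper subdegree analysis in \cite{ZZ}'' and is not self-contained; indeed your crude bound genuinely fails there (for $T=A_{5}$, $n=3$, the shortest class gives orbit length at most $12\cdot 2\cdot 3=72$, which exceeds $k+1=61$). The paper closes these cases with a clean arithmetic observation you are missing: since $k+1$ divides $k^{2}-1=|T|^{m-1}-1$, it is coprime to $|T|$, and since $(k+1)\lambda$ divides $|G_x|$, which divides $|T|\,|\Out(T)|\,m!$, one gets $k+1\mid |\Out(T)|\,m!$ and hence $|T|^{m-1}<|\Out(T)|^{2}(m!)^{2}$, which is absurd for $m=2,3$. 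I recommend incorporating that coprimality step explicitly. Your separate disposal of $n=2$ via ``no simple group has square order'' is correct but unnecessary once the divisibility argument is in place.
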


\begin{proof}
The proof is essentially that of \cite{ZZ}, Propositions 3.1, but we use $%
r=(k+1)\lambda $ instead of $\lambda \geq (r,\lambda )^{2}$.

Assume that $G$ is of diagonal type. Then 
\[
G\leq W=\left\{ (a_{1},...,a_{m})\pi \mid a_{i}\in Aut(T),\pi \in
S_{m},a_{i}\equiv a_{j} \mod Inn(T)\text{ for all }i,j\right\} 
\]%
and there is $x\in \mathcal{P}$ such that%
\[
G_{x}\leq \left\{ (a,...,a)\pi \mid a\in Aut(T),\pi \in S_{m}\right\} \cong
Aut(T)\times S_{m}
\]%
and $M_{x}=D=\left\{ (a,...,a)\mid a\in Inn(T)\right\} $ is a diagonal
subgroup of $M\cong T^{m}$. Put $\Sigma =\left\{ T_{1},...,T_{m}\right\} $,
where $T_{i}$ is identified with $\left\{ (1,...,t,...,1)\pi \mid t\in
T\right\} $ in the $i$-th position. Then $G$ acts on $\Sigma $ by \cite{LPS}%
. Moreover, the set $\mathcal{P}$ can be identified with the set $M/D$ of
the cosets of $D$ in $M$ so that $x=D(1,...,1)$, $k^{2}=\left\vert T\right\vert
^{m-1}$, since $v=k^{2}$, and for $y=D(t_{1},...,t_{m})$, $\psi =(s_{1},...,s_{m})\in M$, $%
\sigma \in Aut(T)$, $\pi \in S_{m}$, we have the actions%
\[
y^{\psi }=D(t_{1}s_{1},...,t_{m}s_{m})\text{, }y^{\sigma }=D(t_{1}^{\sigma
},...,t_{m}^{\sigma })\text{ and }y^{\pi }=D(t_{1\pi ^{-1}},...,t_{m\pi
^{-1}})\text{.}
\]%
Since $M\trianglelefteq G$ and $G$ is primitive on $\mathcal{P}$, $M$ is
transitive on $\mathcal{P}$. Since $T_{1}\trianglelefteq M$, all $T_{1}$%
-orbits on $\mathcal{P}$ have the same length $c>1$. Let $\Gamma _{1}$ be
the $T_{1}$-orbit containing $x$. For any $t_{1}=(t,1...,1)\in T_{1}$, we
have $x^{t_{1}}=D(t,1...,1)$. So that%
\[
\Gamma _{1}=x^{T_{1}}=\left\{ D(t,1...,1):t\in T\right\} 
\]%
and $\left\vert \Gamma _{1}\right\vert =\left\vert x^{T_{1}}\right\vert =c$.
Similarly, we define $\left\vert \Gamma _{i}\right\vert =\left\vert
x^{T_{i}}\right\vert $ for $1\leq i\leq m$. Clearly, $\Gamma _{i}\cap \Gamma
_{j}=\left\{ x\right\} $ for $i\neq j$ provided that $m\geq 2$.

Chose a point-$G_{x}$-orbit $\Delta $ in $\mathcal{P}-\left\{ x\right\} $
such that $\left\vert \Delta \cap \Gamma _{1}\right\vert =d\neq 0$. Let $%
m_{1}=\left[ G_{x}:N_{G_{x}}(T_{1})\right] $. Since $G_{x}$ is isomorphic to
a subgroup of $Aut(T)\times S_{m}$, and $G^{\Sigma }$ acts transitively on $%
\Sigma $, it follows that $m_{1}\leq m$ and hence 
\[
\left\vert \Delta \right\vert \leq m_{1}d\leq m\left\vert T\right\vert \text{%
.} 
\]%
Then $k+1\leq \left\vert \Delta \right\vert \leq m\left\vert T\right\vert $
by Lemma \ref{PP}(3). Since $v=k^{2}=\left\vert T\right\vert ^{m-1}$, we
have $\left\vert T\right\vert ^{(m-1)/2}<m\left\vert T\right\vert $ and
hence $60^{m-3}\leq \left\vert T\right\vert ^{m-3}<m^{2}$. Therefore, $m\leq
3$.

Since $r\mid \left\vert G_{x}\right\vert $ and $G_{x}$ is isomorphic to a
subgroup of $Aut(T)\times S_{m}$, it follows that $(k+1)\lambda \mid
\left\vert T\right\vert \left\vert Out(T)\right\vert m!$. On the other hand, 
$k+1\mid \left\vert T\right\vert ^{m-1}-1$, as $r/\lambda $ divides $k^{2}-1$%
. Thus $k+1\mid \left\vert Out(T)\right\vert m!$ and hence $\left\vert
T\right\vert ^{m-1}=k^{2}<\left\vert Out(T)\right\vert ^{2}\left( m!\right)
^{2}$ with $m\leq 3$. At this point the final part of the proof of \cite{ZZ}%
, Propositions 3.1, can be applied to show that no cases occur.
\end{proof}

\begin{lemma}
\label{NoTWPT}$G$ is not of twisted wreath product type
\end{lemma}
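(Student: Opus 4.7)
The plan is to mimic the argument of Lemma \ref{NoSDT}, adapted to the twisted wreath product setup. Assume for contradiction that $G$ is of TW type, so the socle $M = T_{1}\times \cdots \times T_{m}$ with $T_{i}\cong T$ nonabelian simple (and $m\geq 6$ for the TW type in the Liebeck--Praeger--Saxl classification) acts \emph{regularly} on $\mathcal{P}$, and $G_{x}$ acts transitively on $\{T_{1},\ldots ,T_{m}\}$. Identifying $\mathcal{P}$ with $M$ via the regular action so that $x\leftrightarrow 1$, the $T_{i}$-orbit containing $x$ is $\Gamma _{i}=T_{i}$; in particular $|\Gamma _{i}|=|T|$, $\Gamma _{i}\cap \Gamma _{j}=\{x\}$ for $i\neq j$, and $v=k^{2}=|T|^{m}$.

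Next I would choose a nontrivial $G_{x}$-orbit $\Delta \subseteq \mathcal{P}\setminus \{x\}$ intersecting $\Gamma _{1}$ in $d=|\Delta \cap \Gamma _{1}|\geq 1$ points; such $\Delta $ exists because $\Gamma _{1}\setminus \{x\}$ is nonempty. Writing $m_{1}=[G_{x}:N_{G_{x}}(T_{1})]\leq m$ and using the $G_{x}$-transitivity on the direct factors together with the disjointness of the $\Gamma _{i}$'s, the same counting as in Lemma \ref{NoSDT} gives
\[
|\Delta|=m_{1}d\leq m(|T|-1)<m|T|.
\]
By Lemma \ref{PP}(3), $k+1$ divides $|\Delta|$ and hence $k+1\leq |\Delta|<m|T|$.

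Substituting $k=|T|^{m/2}$ then yields $|T|^{(m-2)/2}<m$. Since $|T|\geq 60$ and $m\geq 6$, the left-hand side is at least $60^{2}=3600$, a blatant contradiction. The only step requiring real care is transferring the orbit-counting argument of Lemma \ref{NoSDT}, where $M$ was transitive but not regular, to the present regular setting; once the identification $\mathcal{P}=M$ is in place, the argument is actually cleaner than in the SD case, since $v=|T|^{m}$ (rather than $|T|^{m-1}$) makes the final inequality stronger and obviates the divisibility/case analysis needed there.
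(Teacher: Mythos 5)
Your proposal is correct and follows essentially the same route as the paper: both exploit the regularity of the socle to produce a point-$G_{x}$-orbit $\Delta$ with $k+1\leq\left\vert \Delta\right\vert\leq m\left\vert T\right\vert$, then contrast this with $k+1>\left\vert T\right\vert^{m/2}$ coming from $v=k^{2}=\left\vert T\right\vert^{m}$ to force $m\leq 2$, contradicting $m\geq 6$. The only difference is that the paper outsources the bound $\left\vert \Delta\right\vert\leq m_{1}d\leq m\left\vert T\right\vert$ to Proposition 3.2 of the cited work of Zhou and Zhan, whereas you rederive it directly from the decomposition $\Delta\subseteq\bigcup_{i}\Gamma_{i}$.
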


\begin{proof}
We may apply the same argument of \cite{ZZ}, Propositions 3.2, to show that
there is a point-$G_{x}$-orbit $\Delta $ in $\mathcal{P}-\left\{ x\right\} $
such $\left\vert \Delta \right\vert \leq m_{1}d\leq m\left\vert T\right\vert 
$ (this is shown in \cite{ZZ}, Propositions 3.2, without using the
assumption $\lambda \geq (r,\lambda )^{2}$). Then $k+1\leq m\left\vert
T\right\vert $ by Lemma \ref{PP}(3). On the other hand, $k+1>\left\vert
T\right\vert ^{m/2}$, since $k^{2}=v=\left\vert T\right\vert ^{m}$. Then $%
\left\vert T\right\vert ^{m/2}<m\left\vert T\right\vert $ and hence $%
60^{m-2}\leq m$ and $m\leq 2$, whereas $m\geq 6$ by \cite{LPS}.
\end{proof}

\begin{theorem}
\label{AffAS}$G$ is either of affine type or of almost simple type.
\end{theorem}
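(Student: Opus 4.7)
The plan is to combine the O'Nan--Scott theorem with Lemmas~\ref{NoSDT} and~\ref{NoTWPT} to reduce to excluding the product type. I would assume $G$ is of product type and derive a contradiction by adapting the orbit-counting argument of \cite{ZZ}, much as was done for the previous two types.

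In product type, $\mathcal{P}$ can be identified with $\Delta^{l}$ for some $l\geq 2$ and some set $\Delta$ with $|\Delta|\geq 5$ (the degree of a nontrivial primitive action), with $G$ embedded in $\mathrm{Sym}(\Delta)\wr S_{l}$ in product action and $G$ transitive on the $l$ coordinates. From $v=k^{2}=|\Delta|^{l}$ we get $k=|\Delta|^{l/2}$. Fix $x=(x_{1},\ldots,x_{l})\in \mathcal{P}$ and, for $1\leq i\leq l$, let $\Gamma_{i}=\{y\in \mathcal{P}:y_{j}=x_{j}\text{ for all }j\neq i\}$ be the $i$-th coordinate line through $x$. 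Then $|\Gamma_{i}|=|\Delta|$, the $\Gamma_{i}$ meet pairwise only at $x$, and $G_{x}$ permutes $\{\Gamma_{1},\ldots,\Gamma_{l}\}$ through its image in $S_{l}$. If $l'$ is the length of any $G_{x}$-orbit on $\{\Gamma_{1},\ldots,\Gamma_{l}\}$ and $\Sigma$ is the union of the corresponding $\Gamma_{i}$ minus $\{x\}$, then $\Sigma$ is $G_{x}$-invariant of cardinality $l'(|\Delta|-1)$, so Lemma~\ref{PP}(3) forces $k+1\mid l'(|\Delta|-1)$.

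A case split on $l$ finishes the proof. For $l=2$, one has $k+1=|\Delta|+1$ and $l'\in\{1,2\}$; since $l'(|\Delta|-1)\equiv -2l' \pmod{|\Delta|+1}$, the divisibility reduces to $|\Delta|+1\mid 2l'\leq 4$, which contradicts $|\Delta|\geq 5$. For $l\geq 3$, using the bound $l'\leq l$ gives $|\Delta|^{l/2}<l|\Delta|$, hence $|\Delta|^{(l-2)/2}<l$. When $l=3$ this forces $|\Delta|\leq 8$, but $k=|\Delta|^{3/2}\in \mathbb{Z}$ requires $|\Delta|$ to be a perfect square, and no square lies in $[5,8]$. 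When $l\geq 4$, already $5^{(l-2)/2}>l$, again contradicting $|\Delta|\geq 5$.

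The main obstacle I foresee is the structural setup: one must extract from the O'Nan--Scott description of the product type the decomposition $\mathcal{P}=\Delta^{l}$ with $|\Delta|\geq 5$, together with the fact that $G$ acts transitively on the $l$ coordinates (so that every $G_{x}$-orbit length on $\{\Gamma_{1},\ldots,\Gamma_{l}\}$ is bounded by $l$). Once that machinery is in place, the entire argument collapses to the congruence $l'(|\Delta|-1)\equiv -2l' \pmod{|\Delta|+1}$ for $l=2$ and to the elementary inequality $|\Delta|^{(l-2)/2}<l$ combined with the squareness restriction on $|\Delta|$ forced by $v=k^{2}$ when $l$ is odd.
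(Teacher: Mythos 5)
Your proof is correct, and while it follows the paper's overall plan (O'Nan--Scott plus Lemmas~\ref{NoSDT} and~\ref{NoTWPT}, then killing the product type via Lemma~\ref{PP}(3)), the mechanism you use for the product case is genuinely more elementary than the paper's. The paper, following \cite{ZZ}, fixes $y$ differing from $x$ in a single coordinate and bounds $\left\vert y^{H_x}\right\vert$ for $H=K^{m}\rtimes S_{m}$ by $m\left[K_{\gamma}:K_{\gamma,\delta}\right]\leq m\frac{v_{0}-1}{s-1}$, where $s\geq 2$ is the rank of the component $K$; this requires choosing $\delta$ in a minimal suborbit, and for $m=2$ it yields $s=2$, so that $H$ has rank $3$ with suborbits $1$, $2(k-1)$, $(k-1)^{2}$ and the contradiction comes from $k+1\mid 2(k-1)$. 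You instead apply Lemma~\ref{PP}(3) to the $G_{x}$-invariant union of coordinate lines, whose cardinality $l'(\left\vert\Delta\right\vert-1)$ is read off directly from the product structure; this bypasses the rank of $K$ entirely, arrives at the same inequality $\left\vert\Delta\right\vert^{l/2}<l\left\vert\Delta\right\vert$ for $l\geq 3$ and the same perfect-square exclusion for $l=3$, and disposes of $l=2$ by the congruence $l'(\left\vert\Delta\right\vert-1)\equiv -2l'\pmod{\left\vert\Delta\right\vert+1}$, which is exactly the divisibility $k+1\mid 2(k-1)$ the paper obtains by the longer rank-$3$ route (your version even covers the subcase $l'=1$ separately). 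What the paper's sharper suborbit bound buys is only the extra conclusion $s=2$, which your argument shows is not needed. One small repair: the justification for $\left\vert\Delta\right\vert\geq 5$ is not that the component action is ``nontrivial primitive'' (degree $3$ or $4$ would be possible for that) but that in product type the component $K$ is primitive of almost simple or diagonal type, whose minimal degree is $5$ --- this bound is load-bearing in both your $l=2$ and $l\geq 4$ cases, so state it precisely.
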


\begin{proof}
The group $G$ is neither of simple diagonal type nor of twisted wreath
product type by Lemmas \ref{NoSDT} and \ref{NoTWPT} respectively. Thus, in
order to complete the proof, we need to rule out the case where $G$ has a
product action on $\mathcal{P}$. Suppose the contrary. Then there is a group 
$K$ with a primitive action (of almost simple or diagonal type) on a set $%
\Gamma $ of size $v_{0}\geq 5$, such that $\mathcal{P}=\Gamma ^{m}$ and $%
G\leq K^{m}\rtimes S_{m}$, where $m\geq 2$. Let $x=(\gamma ,...,\gamma )$
and $y=(\delta ,...,\gamma )$ with $\delta \neq \gamma $ and set $W=K^{m}$
and $H=W\rtimes S_{m}$. Then $W_{x}\cong K_{\gamma }^{m}$, $W_{x,y}\cong
K_{\gamma ,\delta }\times K_{\gamma }^{m-1}$, $H_{x}=W_{x}\rtimes S_{m}$ and 
$K_{\gamma ,\delta }\times \left( K_{\gamma }^{m-1}\rtimes S_{m-1}\right)
\leq H_{x,y}$. Suppose that $K$ has rank $s$ on $\Gamma $, $s\geq 2$. Then
we may choose $\delta $ such that $\left[ K_{\gamma }:K_{\gamma ,\delta }%
\right] \leq \frac{v_{0}-1}{s-1}$. Hence, 
\[
\left\vert x^{H}\right\vert =\frac{\left\vert K_{\gamma }\right\vert
^{m}\cdot m!}{\left\vert K_{\gamma ,\delta }\right\vert \left\vert K_{\gamma
}\right\vert ^{m-1}\cdot (m-1)!}=\left[ K_{\gamma }:K_{\gamma ,\delta }%
\right] m\leq \frac{v_{0}-1}{s-1}m\text{.}
\]%
and, as $x^{G}\subseteq x^{H}$, we get 
\[
v_{0}^{m/2}=v^{1/2}<k+1\leq \left\vert x^{G}\right\vert \leq \left\vert
x^{H}\right\vert \leq m\frac{v_{0}-1}{s-1}<mv_{0}\text{.}
\]%
Then $m=2,3$ and $v_{0}<9$, as $m\geq 2$. If $m=3$, then $k^{2}=v_{0}^{3}$
and hence $v_{0}=4$ and $s=3$, whereas $v_{0}\geq 5$. Thus $m=s=2$. It
follows that, $K$ acts $2$-transitively on $\Gamma $, and $H=K^{2}\rtimes
S_{2}$ has rank $3$ with $H_{x}$-orbits $1$, $2(k-1)$ and $(k-1)^{2}$. Since
each $H_{x}$-orbit is union $G_{x}$-orbit, and since each $G_{x}$-orbit on $%
\mathcal{P}-\left\{ x\right\} $ has length divisible by $k+1$ by Lemma \ref%
{PP}(3), we obtain $k+1\mid 2(k-1)$ and hence $k=v_{0}=3$. So, we again
reach a contradiction as $v_{0}\geq 5$.
\end{proof}

\section{Proof of Theorem \protect\ref{main}}

In this section $G$ is an almost simple group. Hence, $X\trianglelefteq G\leq \mathrm{%
Aut(}X\mathrm{)}$, where $X$ is a non abelian simple group. Moreover $X$,
the socle of $G$, is either sporadic, or alternating, or an exceptional
group of Lie type, or classical. We analyze the first three cases
separately. The sporadic one is ruled out simply by filtering the groups
listed in \cite{At} with respect to the constraints for $X$ to have a
transitive permutation representation of degree $k^{2}$, and when this
occurs the corresponding stabilizer of a point in $X$ to have the order
divisible by $\frac{k+1}{\gcd (k+1,\left\vert \mathrm{Out(}X\mathrm{)}%
\right\vert )}$ (see Lemma \ref{orbits}). The alternating case is settled as
follows. We show that $X_{x}$, the stabilizer in $X$ of a point $x$ of $%
\mathcal{D}$, is a large maximal subgroup of $X$. Hence $X_{x}$ is listed in
Theorem 2 of \cite{AB}. Then we combine some group theoretical arguments, in
particular those developed in \cite{De}, together with some numerical
properties of the binomial coefficients to exclude the case. Finally, when $G$ is an exceptional
group of Lie type, the reduction to $^{2}G_{2}(3)$ in its permutation
representation of degree $36$ is settled by transferring the arguments
developed in \cite{A} and in \cite{ABD} to our context. The key point of the
analysis of the $2$-designs admitting $^{2}G_{2}(3)$ as a flag transitive
automorphism group is to see that $^{2}G_{2}(3)$ acts on $PG_{2}(8)$
preserving a nondegenerate conic $\mathcal{C}$, since $^{2}G_{2}(3)\cong $ $%
P\Gamma L_{2}(8)$. Hence, its permutation representation of degree $36$ is
equivalent to that on the set of secants to $\mathcal{C}$. Some geometry of $%
PG_{2}(8)$ is then used to complete the proof of the case.

\bigskip

\begin{lemma}
\label{orbits}Let $\mathcal{D}$ be a $2$-$(k^{2},k,\lambda )$ design, with $%
\lambda \mid k$, admitting a flag-transitive automorphism group $G$. If $x$
is any point of $\mathcal{D}$, then $\frac{k+1}{\gcd (k+1,\left\vert \mathrm{%
Out(}X\mathrm{)}\right\vert )}$ divides $\left\vert X_{x}\right\vert $.
\end{lemma}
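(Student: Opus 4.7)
The plan is to deduce the divisibility purely from flag-transitivity combined with the normal inclusion $X\trianglelefteq G\leq\Aut(X)$; no structural information about $X$ itself is needed. The argument will proceed in three short steps.

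First, flag-transitivity gives $|G_{x}|=r\cdot|G_{x,B}|$ for any flag $(x,B)$. Since by the preliminary lemma $r=(k+1)\lambda$, this yields immediately that $k+1$ divides $|G_{x}|$.

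Second, since $X$ is normal in $G$, the stabiliser $X_{x}=G_{x}\cap X$ is normal in $G_{x}$, and the second isomorphism theorem produces
\[
G_{x}/X_{x}\cong G_{x}X/X\leq G/X\hookrightarrow\Out(X),
\]
where the last embedding uses that $G$ is almost simple with socle $X$. Hence the index $[G_{x}:X_{x}]$ divides $|\Out(X)|$.

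Third, the factorisation $|G_{x}|=|X_{x}|\cdot[G_{x}:X_{x}]$ together with $(k+1)\mid|G_{x}|$ implies, via the elementary arithmetic fact that $n\mid ab$ forces $n/\gcd(n,b)$ to divide $a$, that $(k+1)/\gcd(k+1,[G_{x}:X_{x}])$ divides $|X_{x}|$. Since $[G_{x}:X_{x}]\mid|\Out(X)|$ one has $\gcd(k+1,[G_{x}:X_{x}])\mid\gcd(k+1,|\Out(X)|)$, and therefore $\frac{k+1}{\gcd(k+1,|\Out(X)|)}$ divides $|X_{x}|$, as required. The step offers no real obstacle; the lemma is a bookkeeping tool that will be applied repeatedly in the subsequent casework to cut down candidate point-stabilisers, in particular when scanning sporadic socles against the ATLAS.
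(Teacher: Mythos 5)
Your proof is correct, but it takes a different route from the paper's. You work entirely at the level of group orders: $k+1$ divides $|G_{x}|=r|G_{x,B}|$, the index $[G_{x}:X_{x}]$ divides $|\Out(X)|$, and the elementary fact that $n\mid ab$ forces $n/\gcd(n,b)\mid a$ finishes the argument. The paper instead works at the level of point orbits: it invokes Lemma \ref{PP}(3) to get that $k+1$ divides every $G_{x}$-orbit length $|y^{G_{x}}|$ on $\mathcal{P}-\{x\}$, writes $|y^{G_{x}}|=\mu\,|y^{X_{x}}|$ with $\mu=[G_{x}:X_{x}]/[G_{x,y}:X_{x,y}]$ dividing $|\Out(X)|$ (using $X_{x}\trianglelefteq G_{x}$ so that each $G_{x}$-orbit splits into $X_{x}$-orbits of equal length), and then applies the same arithmetic. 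Your version is shorter and needs only the basic flag count rather than the tactical-configuration result; what the paper's route buys is a strictly stronger conclusion, namely that $\frac{k+1}{\gcd(k+1,|\Out(X)|)}$ divides the length of \emph{every} $X_{x}$-orbit on $\mathcal{P}-\{x\}$, not merely $|X_{x}|$. That orbit-level divisibility is what is actually cited later (for instance in the proof of Lemma \ref{AltCases1o2}, where it is applied to $x^{M}-\{x\}$ as a union of $X_{x}$-orbits), so while your argument fully establishes the lemma as stated, it would not by itself support all of the subsequent uses made of it in the paper.
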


\begin{proof}
Let $x$ be any point of $\mathcal{D}$. If $y$ is a point of $\mathcal{D}$,
with $y\neq x$, then $\left\vert y^{X_{x}}\right\vert =\frac{\left\vert
B\cap y^{G_{x}}\right\vert \left( k+1\right) }{\mu }$, where $\mu \left\vert
y^{X_{x}}\right\vert =\left\vert y^{G_{x}}\right\vert $, by Lemma \ref{PP}%
(3), as $X_{x}\trianglelefteq G_{x}$. On the other hand, $\mu $ divides $%
\left\vert \mathrm{Out(}X\mathrm{)}\right\vert $, as $\mu =\frac{\left[
G_{x}:X_{x}\right] }{\left[ G_{x,y}:X_{x,y}\right] }$. Therefore $\frac{k+1}{%
\gcd (k+1,\left\vert \mathrm{Out(}X\mathrm{)}\right\vert )}$ divides $%
\left\vert y^{X_{x}}\right\vert $ and hence $\left\vert X_{x}\right\vert $.
\end{proof}

\begin{lemma}
\label{NoSporadic}$X$ is not sporadic.
\end{lemma}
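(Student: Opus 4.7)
The plan is a case-by-case check against the list of sporadic simple groups and their maximal subgroups in the ATLAS \cite{At}, filtering with the numerical necessary conditions already established.

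First, since $G$ is point-primitive (Lemma \ref{PP}(1)) and acts faithfully on $\mathcal{P}$, $G_{x}$ is a maximal subgroup of $G$ and, as $X\trianglelefteq G$, the socle $X$ is transitive on $\mathcal{P}$, giving $[X:X_{x}]=k^{2}$ with $X_{x}=X\cap G_{x}$ maximal in $X$ (the only other possibility, $X\leq G_{x}$, is excluded by nontriviality of the action). I would then run through the $26$ sporadic groups $X$ and, for each, scan the ATLAS for maximal subgroups $M\leq X$ with $[X:M]$ a perfect square; this is already very restrictive because the orders of sporadic groups contain relatively few large square factors.

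Second, for each surviving candidate $(X,M)$ with $k^{2}=[X:M]$, I would impose Lemma \ref{orbits}: $\frac{k+1}{\gcd(k+1,|\mathrm{Out}(X)|)}$ must divide $|M|$. Since $|\mathrm{Out}(X)|\in\{1,2\}$ for every sporadic $X$, this essentially forces $k+1$ or $(k+1)/2$ to divide $|M|$, which combined with $|X|=k^{2}|M|$ is strong enough to eliminate the vast majority of remaining candidates almost immediately. On the few triples $(X,M,k)$ that might still survive, I would apply the replication condition $\lambda(k+1)\mid |G_{x}|$ with $\lambda\mid k$, the bound $(r/\lambda)^{2}>k^{2}$, and the subdegree divisibility of Lemma \ref{PP}(3), namely that every nontrivial orbit of $G_{x}$ on $\mathcal{P}$ has length a multiple of $k+1$; the subdegrees for the relevant primitive actions are listed in the ATLAS or can be obtained in GAP, and this last test eliminates any leftover cases.

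The main obstacle is not conceptual but bookkeeping: the enumeration runs over all sporadic socles together with their almost simple overgroups, with many classes of maximal subgroups in each, and a short computer script is the most practical way to verify that no pair $(X,M)$ simultaneously satisfies all of the constraints above. Still, the perfect-square condition on $[X:M]$ in the first step together with Lemma \ref{orbits} in the second is so restrictive that only a handful of triples survive long enough to require the subdegree analysis, so the argument is essentially a clean filtering rather than a delicate structural investigation.
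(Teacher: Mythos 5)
Your strategy coincides with the paper's: list the sporadic candidates with a transitive (primitive) representation of square degree $k^{2}$, then kill almost all of them with Lemma~\ref{orbits} and the divisibility constraints coming from $r=(k+1)\lambda$ and Lemma~\ref{PP}(3). Two remarks, one minor and one substantive. The minor one: maximality of $G_{x}$ in $G$ does not force $X_{x}=X\cap G_{x}$ to be maximal in $X$ (novelties can occur), so the enumeration should run over the maximal subgroups of each almost simple $G$ with sporadic socle, as you in fact say later; also Lemma~\ref{orbits} gives divisibility of $|X_{x}|$ by $(k+1)/\gcd(k+1,|\mathrm{Out}(X)|)$, not of $|M|$ for an arbitrary overgroup, but this is what you use.

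The substantive point is that your final claim --- that the subdegree test ``eliminates any leftover cases'' --- fails for the one case that actually requires work, namely $X\cong HS$ with $X_{x}\cong M_{22}$ and $k=10$. There $k+1=11$, the nontrivial subdegrees are $22$ and $77$, both divisible by $11$; moreover $11\lambda$ divides $|M_{22}|=2^{7}\cdot3^{2}\cdot5\cdot7\cdot11$ for every $\lambda\mid 10$, and $(r/\lambda)^{2}=121>100$. So every filter you propose is passed. The paper disposes of this case by a genuinely different, structural step: flag-transitivity forces $[X_{x}:X_{x,B}]$ to divide $r=11\lambda$, and since the smallest index of a proper subgroup of $M_{22}$ is $22$, one gets $PSL_{3}(4)\trianglelefteq G_{x,B}\leq P\Sigma L_{3}(4)$ and $\lambda=2$; then $|G_{B}|=10\,|G_{x,B}|$ by block-transitivity on the $10$ points of $B$, and the resulting index $[G:G_{B}]=b$ is not a transitive permutation degree of $HS$ or $HS.2$, a contradiction. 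Without an argument of this kind (or an explicit computation showing no flag-transitive $2$-$(100,10,\lambda)$ design arises), your proof does not close.
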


\begin{proof}
Assume that $X$ is sporadic. Then $X$ is listed in \cite{At}%
.

Assume that $X\cong M_{i}$, where $i=11,12,22,23$ or $24$. Since $\left[
X:X_{x}\right] =k^{2}$, it follows from \cite{KL}, Table 5.1.C, that $%
\lambda ^{2}=2^{a_{1}}3^{a_{2}}$ for some $a_{1},a_{2}\geq 2$. Then $k=12$
and either $X\cong M_{11}$ and $X_{x}\cong F_{55}$, or $X\cong M_{12}$ and $%
X_{x}\cong PSL_{2}(11)$ by \cite{At}. However, these cases are ruled out by
Lemma \ref{orbits}, since $\frac{k+1}{\gcd (k+1,\left\vert \mathrm{Out(}X%
\mathrm{)}\right\vert )}=13$\ does not divide $\left\vert X_{x}\right\vert $.

Assume that $X\cong J_{i}$, where $i=1,2,3$ or $4$. Then $k^{2}$ divides $%
2^{2}$, $2^{6}3^{2}5^{2}$, $2^{6}3^{4}$, or $2^{20}3^{2}11^{2}$,
respectively, by \cite{KL}, Table 5.1.C. Then $i=2$ and either $k=10$ and $%
X_{x}\cong PSU_{3}(3)$, or $k=60$ and $X_{x}\cong PSL_{2}(7)$ by \cite{At}.
However, these cases are ruled out as they contradict Lemma \ref{orbits}.

Assume that $X$ is isomorphic to one of the groups $HS$ or $McL$. By \cite%
{KL}, Table 5.1.C, $k^{2}$ divides $2^{8}3^{2}5^{2}$ or $2^{6}3^{6}5^{2}$
respectively. Then either $X\cong HS$, $X_{x}\cong M_{22}$ and $k=10$, or $%
X\cong McL$, $X_{x}\cong M_{22}$ and $k=45$. The latter is ruled out by
Lemma \ref{orbits}, since $\frac{k+1}{\gcd (k+1,\left\vert \mathrm{Out(}X%
\mathrm{)}\right\vert )}=23$\ does not divide $\left\vert X_{x}\right\vert $%
. The former yields $r=11\lambda $, where $\lambda =1,2,5$ or $10$ as $%
\lambda \mid k$. If $B$ is any block incident with $x$, then $\left[
X_{x}:X_{x,B}\right] $ divides $r$. Then $PSL_{3}(4)\trianglelefteq
G_{x,B}\leq P\Sigma L_{3}(4)$, and hence $\lambda =2$, by \cite{At}. Thus, $%
\left\vert G_{B}\right\vert =10\left\vert G_{x,B}\right\vert $, since $G_{B}$
is transitive on $B$, and hence $b=44$ or $88$. However, $HS\trianglelefteq
G\leq HS.Z_{2}$ has no such transitive representation degrees by \cite{At}.

It is straightforward to check that the remaining cases are ruled out
similarly, as they do not have transitive permutation representations of
degree $k^{2}$ by \cite{At} and \cite{W}.
\end{proof}

\begin{lemma}
\label{AltCases1o2}If $X\cong A_{n}$, then $n\neq 6$ and $G=X$. Moreover,
one of the following holds:

\begin{enumerate}
\item $X_{x}=\left( S_{t}\times S_{n-t}\right) \cap A_{n}$ where $1\leq
t<n/2 $.

\item $X_{x}=\left( S_{t}\wr S_{h}\right) \cap A_{n}$ where $n=th$ and $%
2\leq t\leq n/2$.
\end{enumerate}
\end{lemma}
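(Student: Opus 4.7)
The plan is to show that $X_x$ is a large maximal subgroup of $X = A_n$, invoke the classification of such subgroups, and then eliminate all possibilities except the two families in (1) and (2). By Lemma \ref{PP}(1) together with $X \trianglelefteq G$, the subgroup $X$ is transitive on $\mathcal{P}$ and $X_x$ is maximal in $X$. Lemma \ref{PP}(2) gives $|G| < |G_x|^{3}$, and since $[G:X] = [G_x : X_x] \leq |\Out(A_n)| \leq 4$, we obtain $|X| < 16\,|X_x|^{3}$, so that $X_x$ is a large subgroup of $X$ in the sense of \cite{AB}, up to a bounded factor. Consequently, $X_x$ is among the maximal subgroups listed in Theorem 2 of \cite{AB}: either an intransitive subgroup $(S_t \times S_{n-t}) \cap A_n$, an imprimitive subgroup $(S_t \wr S_h) \cap A_n$, or one of a finite list of primitive exceptional subgroups.

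I would first dispose of $n = 6$: the exceptional isomorphism $A_6 \cong \PSL_2(9)$ places this case under the classical case, so we may assume $n \geq 5$ with $n \neq 6$, whence $|\Out(A_n)| = 2$ and $X \leq G \leq S_n$. For each primitive exceptional subgroup appearing in Theorem 2 of \cite{AB}, the steps are: compute $k^{2} = [A_n : X_x]$ and check whether this is a perfect square; if so, verify the divisibility $(k+1)/\gcd(k+1,2) \mid |X_x|$ coming from Lemma \ref{orbits}; when that survives, test the stronger constraint $\lambda(k+1) \mid |X_x|$ for some $\lambda \mid k$ together with the compatibility of the $X_x$-orbit structure on $\mathcal{P} \setminus \{x\}$ with Lemma \ref{PP}(3). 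The expectation is that every primitive exception fails at least one of these numerical tests, with the suborbit bounds of \cite{De} helping to dispatch the stubborn cases.

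It then remains to show $G = X$. If instead $G = S_n$, then $G_x$ is a strict extension of $X_x$ by an outer involution, so a comparison of $G_x$-orbits and $X_x$-orbits on $\mathcal{P} \setminus \{x\}$, combined with the divisibility $\lambda \mid k$ and $r = \lambda(k+1)$, leads either to a contradiction or to a design already realized over $X$, so that we may replace $G$ by its socle. The main obstacle of the proof is the systematic elimination of the primitive exceptional maximal subgroups of $A_n$ in Theorem 2 of \cite{AB}: this is a finite but lengthy case analysis, and individual cases may require delicate arguments tailored to the specific primitive action in question, before the intransitive and imprimitive families (1) and (2) can be extracted as the only surviving possibilities.
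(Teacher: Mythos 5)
Your overall shape (largeness plus the Alavi--Burness classification) matches the paper's final step, but there are two genuine gaps, and they interact. First, the claim that $X_x$ is maximal in $X$ does not follow from Lemma \ref{PP}(1): point-primitivity gives only that $G_x$ is maximal in $G$, and when $G=S_n$ the intersection $X_x=G_x\cap A_n$ can fail to be maximal in $A_n$. This makes your appeal to Theorem 2 of \cite{AB} circular, since you invoke maximality of $X_x$ before you have shown $G=X$. Relatedly, your largeness bound is $|X|<\mu^2|X_x|^3$ with $\mu=[G_x:X_x]$, which is weaker than the condition $|X|\leq|X_x|^3$ under which Theorem 2 of \cite{AB} is stated; the paper sidesteps both issues by proving $G=X$ \emph{first} and only then concluding that $X_x=G_x$ is a large maximal subgroup.

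Second, and more seriously, your argument for $G=X$ is not a proof. The assertion that comparing $G_x$-orbits with $X_x$-orbits ``leads either to a contradiction or to a design already realized over $X$'' is unsubstantiated, and the second alternative would in any case not establish the lemma, which asserts $G=X$ outright rather than a reduction to that case. The paper's actual argument here is the substantive content of the lemma: assuming $[G_x:X_x]=2$, it takes a maximal subgroup $M$ of $X$ containing $X_x$, observes that $x^M$ is a block of imprimitivity so $|x^M|$ divides $k^2$, while Lemma \ref{orbits} forces $|x^M|\equiv 1 \pmod{\tfrac{k+1}{\gcd(k+1,2)}}$; writing $|x^M|=c\tfrac{k+1}{\gcd(k+1,2)}+1$ and $k^2=d\,|x^M|$ yields a Diophantine system whose only solutions are $(k,d,c)=(15,9,3)$ or $(3,3,1)$, whence $n\leq[A_n:M]\leq 9$ and $A_9$ has no transitive representation of degree $225$. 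Nothing in your sketch supplies an argument of comparable force. Finally, note that the paper disposes of $n=6$ by direct elimination (both candidate degrees $9$ and $36$ fail, the latter via Lemma \ref{orbits} applied to $X_x\cong D_{10}$), whereas your re-labelling of $A_6$ as $\PSL_2(9)$ defers the case rather than proving the stated conclusion $n\neq 6$.
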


\begin{proof}
Assume that $X\cong A_{n}$. If $n=6$, then $k^{2}=3^{2}$ or $6^{2}$, and the
former is ruled out by \cite{At}, whereas the latter yields $X_{x}\cong
D_{10}$. However this case cannot occur by Lemma \ref{orbits}, since $\frac{%
k+1}{\gcd (k+1,\left\vert \mathrm{Out(}X\mathrm{)}\right\vert )}=7$ does not
divide $\left\vert X_{x}\right\vert $. Thus $n\neq 6$, and hence $\left\vert 
\mathrm{Out(}X\mathrm{)}\right\vert =2$ by \cite{KL}, Theorem 5.1.3.

Let $\mu =\left[ G_{x}:X_{x}\right] $. Since $G=G_{x}X$, it follows that $%
G_{x}/X_{x}\cong G/X\leq \mathrm{Out(}X\mathrm{)}$ and hence $\mu \leq 2$.
Assume that $\mu =2$. Let $M$ be a maximal subgroup of $X$ containing $X_{x}$%
. Then $x^{M}$ is a block of imprimitivity for $X$ and hence $\left\vert
x^{M}\right\vert \mid k^{2}$. Since $x^{M}-\left\{ x\right\} $ is union of $%
X_{x}$-orbit, and each $X_{x}$-orbit distinct from $\left\{ x\right\} $ is
of length divisible by $\frac{k+1}{\gcd (k+1,2)}$ Lemma \ref{orbits}, it
follows that $\frac{k+1}{\gcd (k+1,2)}\mid \left\vert x^{M}\right\vert -1$.
Then $\left\vert x^{M}\right\vert =c\frac{k+1}{\gcd (k+1,2)}+1$, for some $%
c\geq 1$, and hence $k^{2}=d\left( c\frac{k+1}{\gcd (k+1,2)}+1\right) $ for
some $d\geq 1$. Thus%
\begin{equation}
dc\frac{k+1}{\gcd (k+1,2)}+d-1=k^{2}-1  \label{altrel}
\end{equation}%
and hence 
\begin{equation}
d=\theta \frac{k+1}{\gcd (k+1,2)}+1  \label{altre2}
\end{equation}%
for some $\theta \geq 1$. Now, substituting (\ref{altre2}) in (\ref{altrel})
we obtain 
\[
\theta c\left( k+1\right) +\theta \gcd (k+1,2)<\left( k-1\right) \left( \gcd
(k+1,2)\right) ^{2}
\]%
and hence $\theta c<\left( \gcd (k+1,2)\right) ^{2}$. Therefore, $k$ is odd
and $\left( \theta ,c\right) =(1,1),(1,2),(1,3)$ or $(3,1)$, which,
substituted in (\ref{altrel}) and (\ref{altre2}), yield $(k,d,c)=(15,9,3)$
or $(3,3,1)$. Then $n\leq \left[ A_{n}:M\right] =9,3$, respectively. Actually, $n=9$ by \cite{At}, as $n\geq 5$ and $n\neq 6$. However, $A_{9}$ has
no transitive permutation representations of degree $15^{2}$. Thus $\mu =1$, 
$G=X$, and hence $X_{x}$ is a large, maximal subgroup of $X$ by Lemma \ref%
{PP}(2). The last part of Lemma's statement follows from \cite{AB}, Theorem
2, since $\left[ X:X_{x}\right] =k^{2}$.
\end{proof}

\begin{lemma}
\label{Nemasimmetricno}Case (2) of Lemma \ref{AltCases1o2} cannot occur.
\end{lemma}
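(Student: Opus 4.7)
My plan is to rule out case (2) by showing that the required equation
\[
k^{2} \;=\; [X:X_x] \;=\; \frac{n!}{(t!)^{h}\, h!}
\]
has no integer solutions under the given constraints. From Lemma~\ref{AltCases1o2} we have $G = X \cong A_n$ with $n \geq 5$ and $n \neq 6$, and case (2) imposes $n = th$ with $2 \leq t \leq n/2$. This forces $h = n/t \geq 2$, and the smallest admissible $n$ is $8$ (since $n=4$ makes $A_n$ non-simple and $n=6$ is excluded). The index formula above follows from the fact that $S_t \wr S_h$ contains odd permutations whenever $t \geq 2$, so that $(S_t \wr S_h) \cap A_n$ has index $2$ in $S_t \wr S_h$.

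The strategy is a single $p$-adic valuation argument. By Bertrand's postulate, for every $n \geq 4$ there is a prime $p$ with $n/2 < p \leq n$. I will show that the exponent of any such $p$ in $\frac{n!}{(t!)^{h} h!}$ is exactly $1$, which contradicts this number being a perfect square. Specifically:
\begin{enumerate}
\item Since $p \leq n < 2p$ (and $p^{2} > n$ because $n \geq 4$), the exponent of $p$ in $n!$ is $\lfloor n/p \rfloor = 1$;
\item Since $t \leq n/2 < p$, no factor in $t!$ is divisible by $p$, whence $p \nmid (t!)^{h}$;
\item Since $h = n/t \leq n/2 < p$, likewise $p \nmid h!$.
\end{enumerate}
Combining the three bullets, the exponent of $p$ in $k^{2}$ equals $1-0-0 = 1$, the desired contradiction.

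The only obstacle I anticipate is bookkeeping: checking that every admissible $(n,t,h)$ satisfies $h \leq n/2$, so that the Bertrand prime simultaneously exceeds $t$ and $h$. This is automatic from $t \geq 2$, since then $h = n/t \leq n/2$. No appeal to the orbit-length divisibility in Lemma~\ref{PP} is required beyond the bare parameter equation $v = k^{2}$, so the whole case is settled by a purely arithmetic obstruction rather than a flag-transitivity argument. If a referee prefers to avoid Bertrand's postulate, the same prime $p$ can be extracted from Sylvester's theorem or verified by direct inspection for the finitely many small $n$; but in either formulation the proof is essentially immediate once the correct prime is named.
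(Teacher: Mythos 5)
Your argument is correct, and it is genuinely different from the one in the paper. The paper does not touch the arithmetic of the index at all: it invokes the subdegree computation from Delandtsheer's work to produce a $G_x$-orbit of length $h(h-1)$ or $t^{2}h(h-1)/2$, applies Lemma~\ref{PP}(3) to force $k+1$ to divide that length, deduces $\frac{(th)!}{(t!)^{h}h!}<(th)^{2}$, and then uses Stirling-type bounds to reduce to finitely many pairs $(h,t)$ which are checked against equation~(\ref{kvadrat}) by hand. Your route replaces all of this with a single valuation computation: a Bertrand prime $p$ with $n/2<p\leq n$ has exponent exactly $1$ in $n!$ (since $p\leq n<2p$ and $p^{2}>n$ for $n\geq 4$) and exponent $0$ in $(t!)^{h}h!$ (since $t\leq n/2<p$ and $h=n/t\leq n/2<p$), so $[X:X_x]$ is never a perfect square. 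The index formula $[A_n:(S_t\wr S_h)\cap A_n]=n!/((t!)^{h}h!)$ is justified correctly, since $S_t\wr S_h$ contains a transposition for $t\geq 2$. What your approach buys is considerable: it uses only the bare parameter equation $v=k^{2}$, so it needs neither flag-transitivity nor the orbit-length divisibility, it avoids the somewhat opaque finite case check in the paper (where the verification that (\ref{kvadrat}) fails for the residual pairs is asserted rather than displayed), and it proves the stronger statement that the primitive action of $A_n$ on partitions into $h$ parts of size $t$ never has square degree. The only thing the paper's method offers in exchange is that it stays within the toolkit (subdegrees plus Lemma~\ref{PP}(3)) used uniformly across the other cases; but as a proof of this particular lemma, yours is shorter, self-contained, and complete.
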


\begin{proof}
Assume that $X_{x}=\left( S_{t}\wr S_{h}\right) \cap A_{n}$, where $n=th$
and $2\leq t\leq n/2$. Then 
\begin{equation}
k^{2}=\left[ X:X_{x}\right] =\frac{th!}{\left( t!\right) ^{h}h!}\text{.}
\label{kvadrat}
\end{equation}%
By \cite{De}, there is a $G_{x}$-orbit (namely a $2$-cycle) of length either $%
h(h-1)$ or $t^{2}\frac{h(h-1)}{2}$ according to whether $t=2$ or $t>2$
respectively. Then $k+1$ divides the length of such a orbit by Lemma \ref{PP}%
(3), as $G=X$ by Lemma \ref{AltCases1o2}. Thus, in both cases we have $\frac{%
th!}{\left( t!\right) ^{h}h!}<\left( th\right) ^{2}$. The inequality%
\begin{equation}
\frac{h^{ht}}{\left( ht\right) ^{h}\cdot h}=\frac{e\left( \frac{th}{e}%
\right) ^{th}}{e^{h}t^{h}\left( \frac{t}{e}\right) ^{th}\cdot eh\left( \frac{%
h}{e}\right) ^{h}}\leq \frac{th!}{\left( t!\right) ^{h}h!}<\left( th\right)
^{2}  \label{sestougau}
\end{equation}%
is determined by using the known bound $e\left( \frac{f}{e}\right) ^{f}\leq
f!\leq ef\left( \frac{f}{e}\right) ^{n}$ for $f\in \mathbb{N}$, where $e$ is
the Napier's constant. Thus $h^{ht}<\left( ht\right) ^{h+2}\cdot h<\left(
ht\right) ^{h+3}$.

Assume that $h^{t}\geq (ht)^{2}$, then $\left( ht\right) ^{2h}\leq
h^{ht}<\left( ht\right) ^{h+3}$ and hence $h=2$, as $2\leq t\leq n/2$. Then (%
\ref{sestougau}) becomes $2^{t}\leq \binom{2t}{t}=\frac{2t!}{\left(
t!\right) ^{2}}<8t^{2}$ and so $t\leq 9$. However, (\ref{kvadrat}) is not
fulfilled for $h=2$ and any of these values of $t$.

Assume that $h^{t}<(ht)^{2}$. Then $2^{t-2}\leq h^{t-2}<t^{2}$ and hence
either $t=2$, or $3\leq t\leq 8$ and $h\leq 9$. Actually, the pairs $(h,t)$
in latter case do not fulfill (\ref{kvadrat}). Hence $t=2$. Since $h!\geq
2^{h}$, being $h\geq 2$, and (\ref{sestougau}) yields $2^{h}\leq \binom{2h}{h%
}\leq \frac{2h!}{2^{h}h!}<\left( 2h\right) ^{2}$ and hence $h\leq 8$.
However, (\ref{kvadrat}) is not fulfilled for $t=2$ and any of these values
of $h$.\ 
\end{proof}

\begin{lemma}
\label{nemanazimenicno}$X$ is not isomorphic to $A_{n}$.
\end{lemma}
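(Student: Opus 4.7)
By Lemmas~\ref{AltCases1o2} and \ref{Nemasimmetricno}, the remaining task is to exclude Case~(1) of Lemma~\ref{AltCases1o2}: $G=X\cong A_n$ with $n\ne 6$, and $X_x=(S_t\times S_{n-t})\cap A_n$ for some $1\le t<n/2$. The plan is to identify $\mathcal P$ with the set of $t$-subsets of $\Omega=\{1,\dots,n\}$, so that $k^{2}=\binom{n}{t}$ and the $G_x$-orbits on $\mathcal P\setminus\{x\}$ are indexed by $s=0,\dots,t-1$ with lengths $\binom{t}{s}\binom{n-t}{t-s}$. By Lemma~\ref{PP}(3) each such length is divisible by $k+1$; these divisibilities, combined with $k^{2}=\binom{n}{t}$, will force $(n,t)$ into a very short list.

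For $t=1$ one has $v=n=k^{2}$ and $A_n$ is transitive on $k$-subsets. Flag-transitivity forces the $b=k(k+1)\lambda$ blocks to form a single $A_n$-orbit on $k$-subsets, so $\binom{k^{2}}{k}=k(k+1)\lambda\le k^{2}(k+1)$, contradicting $\binom{k^{2}}{k}>k^{2}(k+1)$ for $k\ge 3$. For $t\ge 2$, the orbit of length $t(n-t)$ gives $k+1\le t(n-t)<tn$, hence $\binom{n}{t}=k^{2}<t^{2}n^{2}$; elementary estimates in the style of the proof of Lemma~\ref{Nemasimmetricno} bound $n$ in terms of $t$ and reduce to a finite check. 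Combining this with the Diophantine identity $\binom{n}{t}=k^{2}$ and the divisibility $k+1\mid t(n-t)$, exactly two numerical candidates survive: $(n,t,k)=(9,2,6)$ and $(n,t,k)=(50,3,140)$.

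Both survivors are then eliminated by the orbit-length principle that, for a block $B$ through $x$, the set $B\cap y^{G_x}$ is a union of $G_{x,B}$-orbits on $y^{G_x}$ of cardinality $|y^{G_x}|/(k+1)$. For $(n,t,k)=(50,3,140)$, the unique element of $B$ lying in the $G_x$-orbit of length $141$ forces $G_{x,B}\le G_{x,y}$; since $X_{x,y}\cong S_{46}$ has minimal faithful permutation degree $46$, the condition $\lambda\mid 140$ constrains $\lambda\in\{1,2\}$ and forces $A_{46}\le G_{x,B}$. But every $A_{46}$-orbit on the $G_x$-orbit of length $3243$ has size at least $46$, contradicting the required intersection size $3243/141=23$. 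For $(n,t,k)=(9,2,6)$ the same principle applies, but a case distinction on $\lambda\in\{1,2,3,6\}$ is needed, and one exploits the action of $G_{x,B}$ on the $2$-point subset $B\cap(\text{orbit of length }14)$ and the $3$-point subset $B\cap(\text{orbit of length }21)$ inside the restricted subgroup lattice of $G_x\cong S_7$. I expect this small remaining case, rather than $(50,3,140)$, to be the principal technical obstacle, since the clean orbit-length obstruction of the latter is no longer decisive once the ambient group has a rich low-order subgroup structure.
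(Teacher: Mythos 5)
Your skeleton coincides with the paper's: reduce to the action on $t$-subsets with $k^{2}=\binom{n}{t}$, use Lemma~\ref{PP}(3) on the suborbit lengths $\binom{t}{s}\binom{n-t}{t-s}$ to pin down $(n,t)$, and then kill the survivors. Two of your ingredients are genuinely different from the paper and both are sound. For $t=1$ the paper runs a subgroup-index analysis of $X_{x,B}\leq A_{n-1}$ via Dixon--Mortimer; your observation that block-transitivity of $A_{k^2}$ on $k$-subsets forces $b=\binom{k^{2}}{k}>k^{2}(k+1)\geq b$ is shorter and correct. For $(n,t,k)=(50,3,140)$ the paper identifies $X_B$ as a subgroup of $(S_4\times S_{46})\cap A_{50}$ and derives a contradiction from the invariant $4$-set; your route (the unique $y\in B$ with $|x\cap y|=2$ gives $G_{x,B}\leq G_{x,y}\cong S_{46}$, the index $\lambda$ forces $A_{46}\leq G_{x,B}$, and no union of $A_{46}$-orbits on the suborbit of length $3243$ has size $23$) is clean and checks out. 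One caveat on the reduction itself: the paper gets ``$t\leq 2$ or $(n,t)=(50,3)$'' by citing the number-theoretic fact that $\binom{n}{t}$ is a perfect square with $3\leq t<n/2$ only for $(50,3)$; your inequality $\binom{n}{t}<t^{2}n^{2}$ does bound $(n,t)$ for $t\geq 3$, but it is vacuous for $t=2$, where $\binom{n}{2}=k^{2}$ has infinitely many solutions $(9,6),(50,35),(289,204),\dots$ and you must actually use $k+1\mid 2(n-2)$: writing $2(n-2)=c(k+1)$ one gets $c\leq 2$ from $k>(n-2)/\sqrt{2}$ and then only $c=2$, $n=9$ survives. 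You assert this outcome but should write it out.

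The genuine gap is the case $(n,t,k)=(9,2,6)$, which you explicitly leave open, and it is not disposed of by the orbit-length principle you invoke. The paper's elimination needs a further idea: after excluding $\lambda=1$ (no affine plane of order $6$) and $\lambda=6$ (no suitable transitive representation), it identifies $X_{x,B}$ for $\lambda=2,3$ as $A_6$ resp.\ $Z_2\times S_5$ inside $X_x\cong S_7$, recognizes from $b=\binom{9}{\lambda+1}$ and the isomorphism type of $X_B$ that the block action of $A_9$ is its action on $(\lambda+1)$-subsets of $\{1,\dots,9\}$ with incidence given by set-theoretic inclusion of the $2$-subset $x$ in the $(\lambda+1)$-subset $B$, and then checks that this incidence structure cannot have the parameters of a $2$-$(36,6,\lambda)$ design. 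That identification of the blocks as small subsets of the underlying $9$-set is the decisive step, and nothing in your sketch (``exploit the action of $G_{x,B}$ on the $2$-point and $3$-point intersections inside the subgroup lattice of $S_7$'') supplies it or an alternative. As written, your argument proves the lemma only modulo this case, so the proof is incomplete.
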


\begin{proof}
In order to prove the assertion we need to rule out case (1) of Lemma \ref%
{AltCases1o2}, since case (2) has been ruled out in Lemma \ref%
{Nemasimmetricno}. Hence, assume that $X_{x}=\left( S_{t}\times
S_{n-t}\right) \cap A_{n}$ where $1\leq t<n/2$. Then the action on the point
set of $\mathcal{D}$ and on the $t$-subsets of $\left\{ 1,...,n\right\} $
are equivalent. Thus $k^{2}=\binom{n}{t}$. Then either $t\leq 2$, or $n=50$
and $t=3$ by \cite{AZ}, Chapter 3, since $1\leq t<n/2$.

Assume that $t=1$. Then $k^{2}=n\geq 9$, $X_{x}\cong A_{n-1}$ and hence $X$
acts point-$2$-transitively on $\mathcal{D}$. Then $X_{x,B}$ is a subgroup
of $X_{x}$ of index $r=\lambda (\sqrt{n}+1)$, where $\lambda \mid \sqrt{n}$.
Since $X_{x}$ has no subgroups of index less than $n-1$, as $n\geq 9$, it
follows that $\lambda =\sqrt{n}$. Then $r<\binom{n-1}{2}$ and hence $X_{x,B}$
is one of the subgroups of $A_{n-1}$ listed in \cite{DM}, Theorem 5.2.A, as $%
n\geq 9$. Assume that $X_{x,B}$ preserves an $s$-subset of $\left\{
1,...,n-1\right\} $, where $s=1,2$. Then $\binom{n-1}{s}=\sqrt{n}(\sqrt{n}%
+1) $, a contradiction. Then $n$ is odd and $r=\left[ X_{x}:X_{x,B}\right] =%
\frac{1}{2}\binom{n-1}{n/2}$, since $G=X$ by Lemma \ref{AltCases1o2}.
Therefore%
\[
2^{n/2-1}\leq \frac{1}{2}\binom{n-1}{n/2}=\sqrt{n}(\sqrt{n}+1)<2n 
\]%
and hence $n=9$, $X_{x}\cong A_{8}$ and $X_{x,B}\cong AGL_{3}(2)$, since $n$
is a square and $n$ is odd. However, this case is ruled out since $r\neq 15$.

Assume that $t=2$. Then $G$ has rank $3$ and, if $x$ is any point of $%
\mathcal{D}$, the $G_{x}$-orbits, say $\mathcal{O}_{i}$, $i=1,2,3$, have
length $1,\allowbreak 2n-4$ and $\binom{n-2}{2}$, respectively (see \cite{De}%
). Then $k+1$ divides the length of each of such orbits by Lemma \ref{PP}(3), as $G=X
$ by Lemma \ref{AltCases1o2}. Then $2\left( n-2\right) =c(k+1)$ for some $%
c\geq 1$ and hence%
\begin{equation}\label{trst}
\left( \frac{2\left( n-2\right) }{c}-1\right) ^{2}=\frac{n(n-1)}{2}\text{.}
\end{equation}
Then (\ref{trst}) yields $c=2$, $n=9$, $k=6$ and $X_{x}\cong S_{7}$. Note that, $\lambda
>1$ since there are no affine planes of order $6$. Also $\lambda \neq 6$,
since $S_{7}$ has no transitive permutation representations of degree $63$
by \cite{At}. Thus, either $\lambda =2$, $X_{x,B}\cong A_{6}$ and $X_{B}\cong
\left( A_{6}\times Z_{3}\right) :Z_{2}$, or $\lambda =3$, $X_{x,B}\cong
Z_{2}\times S_{5}$ and $X_{B}\cong S_{4}\times S_{5}$ again by \cite{At},
since $\lambda \mid k$. Then the actions on the point-set and on the
block-set of $\mathcal{D}$ are equivalent to the actions on the sets of $2$%
-subsets and $(\lambda +1)$-subsets of $\mathbb{N}_{9}=\left\{
1,...,9\right\} $ respectively. Then we may identify the point-set and the
block-set of $\mathcal{D}$ with these sets, respectively, in a way that the
incidence relation is the set-theoretic inclusion, as $X_{x,B}$ is
isomorphic either to $A_{6}$ or to $Z_{2}\times S_{5}$ according to whether $%
\lambda =2$ or $3$ correspondingly. So, $k\leq \lambda +1\leq 4$ and we reach a contradiction as $k=6$.

Assume that $t=3$ and $n=50$. Then $X_{x}\cong \left( S_{3}\times
S_{47}\right) \cap A_{50}$, $k=140$ and $r=141\lambda $, where $\lambda \mid
140$ and $b=141\cdot 140\cdot \lambda $. Then $b\leq 141\cdot 140^{2}<\binom{%
50}{6}$ and $b\neq \frac{1}{2}\binom{50}{25}$ and hence $A_{50-\ell }\leq
X_{B}\leq \left( S_{\ell }\times S_{50-\ell }\right) \cap A_{50}$, where $%
\ell <6$, by \cite{DM}, Theorem 5.2.A. Moreover $\left\vert X_{B}\right\vert 
$ is coprime to $47$, as $b=141\cdot 140\cdot \lambda $, and hence $\ell =4,5
$. Thus $f\binom{50}{\ell }=141\cdot 140\cdot \lambda $, where $f$ is the
index of $X_{B}$ in $\left( S_{\ell }\times S_{50-\ell }\right) \cap A_{50}$%
. If $\ell =5$, then $23$ divides $\binom{50}{5}$ and hence $\lambda $,
whereas $\lambda \mid 140$. Therefore, $\ell =4$, $A_{46}\leq X_{B}\leq
\left( D_{8}\times S_{46}\right) \cap A_{50}$ and hence $f=3\mu $ and $%
\lambda =35\mu $ with $\mu \leq 4$. Then $X_{B}$ preserves a $4$-set $Y$ of 
$\mathbb{N}_{50}=\left\{ 1,...,50\right\} $, whereas $X_{x}$ preserves a $3
$-set $Z$ of $\mathbb{N}_{50}$. Then $X_{x,B}$ preserves $Z\cup Y$. Set $%
w=\left\vert Z\cup Y\right\vert $, then $4\leq w\leq 7$, and $\binom{50}{w}$
must divide $\left[ X:X_{x,B}\right] $, which is equal to $\binom{50}{3}%
\cdot 141\cdot 35\cdot \mu $. Thus $w=4$ and hence $Z\subseteq Y$. Then $%
A_{46}\leq X_{x,B}$, and so $k\mid 8$, as $A_{46}\leq X_{B}\leq \left(
D_{8}\times S_{46}\right) \cap A_{50}$. This is a contradiction, as $k=6$.
\end{proof}

\begin{lemma}
\label{IzuzzetnoSimple}If $X$ is isomorphic to socle a finite exceptional
group of Lie type, then $\mathcal{D}$ is a $2$-$(36,6,\lambda )$ design,
where $\lambda \mid 6$ and $\lambda >1$, and $X$ is isomorphic either to $%
G_{2}(2)^{\prime }$ or to $^{2}G_{2}(3)^{\prime }$.
\end{lemma}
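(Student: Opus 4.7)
The plan is to adapt to our setting ($v=k^{2}$, $\lambda \mid k$) the strategy used in \cite{A} and \cite{ABD}. By Lemma \ref{PP}(2) the stabilizer $G_{x}$ is a large subgroup of $G$, and since $G/X$ embeds in $\Out(X)$, which is small for every exceptional group of Lie type, any maximal subgroup $M$ of $X$ containing $X_{x}$ is itself large in $X$ (that is, $|X| < c\cdot |M|^{3}$ for a small constant $c$ depending on $|\Out(X)|$). One can therefore invoke the Alavi--Burness classification of large maximal subgroups of exceptional groups of Lie type \cite{AB}, yielding a short explicit list of candidate pairs $(X,M)$ to examine.

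Next I would run through each family $^{2}B_{2}(q)$, $^{2}G_{2}(q)$, $G_{2}(q)$, $^{3}D_{4}(q)$, $^{2}F_{4}(q)$, $F_{4}(q)$, $E_{6}(q)$, $^{2}E_{6}(q)$, $E_{7}(q)$, $E_{8}(q)$ and filter each candidate $M$ through two constraints: first, $[X:M]=k^{2}$ must be a perfect square; second, by Lemma \ref{orbits}, $\tfrac{k+1}{\gcd(k+1,|\Out(X)|)}$ must divide $|X_{x}|$. Equivalently, any Zsygmondy primitive prime divisor of $k^{2}-1$ that divides $k+1$ is forced to divide $|M|$, which is a very restrictive condition. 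For parabolic, reductive, subfield and novelty maximal subgroups with $q$ growing, the indices listed in \cite{AB} are rarely squares, and even when they are, the Zsygmondy constraint typically fails; this disposes of every family except for a short list of small-$q$ exceptions.

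The remaining small-$q$ candidates, namely $\Sz(8)$, $^{2}G_{2}(3)'$, $G_{2}(2)'$, $G_{2}(3)$, $G_{2}(4)$, $^{3}D_{4}(2)$, $^{2}F_{4}(2)'$ and $F_{4}(2)$, are then handled by inspection using \cite{At}: I would list all transitive permutation representations of square degree of each almost simple extension, and for each such representation verify Lemma \ref{PP}(3), namely $(k+1)\mid |\Delta|$ for every point-$G_{x}$-orbit $\Delta$, together with Lemma \ref{orbits}, using the explicit character table data. Only the degree $36$ actions of $G_{2}(2)'\cong \PSU_{3}(3)$ and of $^{2}G_{2}(3)'\cong \PSL_{2}(8)$ survive this inspection, forcing $k=6$. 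Since an affine plane of order $6$ does not exist (Bruck--Ryser, or Tarry), we conclude $\lambda>1$; together with $\lambda \mid k=6$ this yields the stated conclusion.

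The main obstacle will be the systematic bookkeeping required to apply the two filters uniformly across every family, especially for the higher-rank groups $E_{6}$, $E_{7}$, $E_{8}$ where the index formulas are intricate and one has to be careful with twisted forms and novelty maximal subgroups. The arithmetic itself is elementary, but the list of cases is long, and the analysis has to be sharp enough to retain both surviving degree $36$ actions (which will be needed for the geometric description of the four designs in the subsequent section) while discarding every larger candidate.
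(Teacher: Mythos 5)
Your overall frame --- largeness of $G_{x}$ via Lemma \ref{PP}(2), the Alavi--Burness list \cite{AB}, the divisibility constraints from Lemmas \ref{PP}(3) and \ref{orbits}, and the endgame $k=6$ with $\lambda>1$ from the non-existence of an affine plane of order $6$ --- is the same as the paper's, which likewise adapts the machinery of \cite{A} and \cite{ABD}. For the non-parabolic maximal subgroups your plan would work, though the paper has a slicker shortcut: Table 2 of \cite{A} already records an upper bound $u_{r}$ for any common divisor of $v-1$, $\left\vert G_{x}\right\vert$ and the subdegrees, together with the inequality $u_{r}^{2}<\ell _{v}$; since in the present setting that bound applies to $r/\lambda =k+1$, one gets $(k+1)^{2}<k^{2}$ at once, killing every Table 2 entry without re-examining any index. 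Table 3 (entries with $v$ a square) then leaves only $G_{2}(3)$ with $2^{3}{:}\PSL_{3}(2)$ and $G_{2}(4)$ with $\PSL_{2}(13)$, eliminated because $k+1=43$, resp.\ $481$, does not divide $\left\vert G_{x}\right\vert$.

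The genuine gap is the parabolic case. Maximal parabolic subgroups are always large, so they all pass your Alavi--Burness filter, and they form infinitely many candidates (one or more for every $q$ in every family). Your proposed disposal --- the indices are ``rarely squares'' and the Zsygmondy constraint ``typically fails'' --- is not an argument: deciding for which $q$ an index of the shape $[X:P]=\prod (q^{d_{i}}-1)/\prod (q^{e_{i}}-1)$ is a perfect square is a nontrivial Diophantine question, and ``typically'' cannot close an infinite family. The mechanism the paper actually uses is different and essential: when $E_{6}(q)$ is not involved (and in most $E_{6}$ configurations), a parabolic point stabilizer forces a subdegree that is a power of $p$ (\cite{A}, Lemma 3; \cite{Saxl}, Lemma 2.6), so Lemma \ref{PP}(3) gives $k+1=p^{s}$; then $k^{2}=(p^{s}-1)^{2}$ must divide $\left\vert X\right\vert$, and a comparison of $s$ with $\zeta _{p}(G)$ plus a primitive-prime-divisor argument (a Sylow $u$-subgroup for $u$ a primitive prime divisor of $p^{\zeta _{p}(G)}-1$ would have to lie in $G_{x}$, but no maximal parabolic has order divisible by $u$) rules everything out; the remaining $E_{6}(q)$ parabolics are handled by the bound $\left( r/\lambda \right) ^{2}>v$ as in \cite{A}, pp.~1012--1013. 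Without this $p$-power subdegree input your plan does not reduce the parabolic case to a finite check, so the ``short list of small-$q$ exceptions'' you propose to inspect in \cite{At} is not justified. (A minor further point: $G_{2}(2)$, $^{2}G_{2}(3)$, $^{2}B_{2}(2)$ and $^{2}F_{4}(2)$ are not simple and need the separate elementary treatment the paper gives them at the outset, including ruling out the degree $9$ action of $^{2}G_{2}(3)$.)
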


\begin{proof}
Recall that an exceptional group of Lie type is simple apart from $%
^{2}B_{2}(2)$, $G_{2}(2)$, $^{2}G_{2}(3)$, or $^{2}F_{4}(2)$ by\ \cite{KL},
Theorem 5.1.1. Thus, either $X$ is isomorphic to an exceptional simple group
of Lie type, or $X$ is isomorphic to one of the groups $^{2}B_{2}(2)$, $%
G_{2}(2)$, $^{2}G_{2}(3)$, or $^{2}F_{4}(2)$. If the latter occurs, since $G$
has a primitive permutation representation of degree $k^{2}$, then the
unique admissible cases to be analyzed are either $G_{2}(2)$ and $k=6$, or $%
^{2}G_{2}(3)$ and $k=3,6$ by \cite{At}. Suppose that $k=3$. Then $\lambda
=1,3$ as $\lambda \mid k$. If $\lambda =1$, then $\mathcal{D}\cong AG_{2}(3)$
and hence $G\leq AGL_{2}(3)$, which is impossible. Then $\lambda =3$, $r=12$
and hence $G\cong $ $P\Gamma L_{2}(8)$ and $G_{x}\cong F_{56}:Z_{3}$. So $%
\left\vert G_{x,B}\right\vert =14$ and $G_{x,B}\leq F_{56}$, a
contradiction. Thus $k=6$ and hence $\lambda =1,2,3$ or $6$ as $\lambda \mid
k$. Also, $\lambda >1$ since there are no affine planes of order $6$.
Therefore, $k=6$, $\lambda \mid 6$ and $\lambda >1$, and $X$ is isomorphic
either to $G_{2}(2)^{\prime }$ or to $^{2}G_{2}(3)^{\prime }$. 

Assume that $X$ is isomorphic to an exceptional simple group of Lie type.
Suppose that $G_{x}$ is not parabolic. Then $G_{x}$ is one of the groups
listed in \cite{ABD}, Theorem 1.6, or equivalently in Tables 2 and 3 of \cite%
{A}, since $G_{x}$ is a large maximal subgroup of $G$ by Lemma \ref{PP}(2).
In \cite{A}, Alavi points out that the first and the second column of Table
2 contains $X$ and $X_{x}$, respectively, the third one contains a lower
bound $\ell _{v}$ for $v=\left[ X:X_{x}\right] $, and the fourth one
contains an upper bound $u_{r}$ for $r$, determined by using the fact that $r
$ is a common divisor of $v-1$, of $\left\vert G_{x}\right\vert $ and of the
subdegrees of $G_{x}$ by his Lemma 4. Then, the author shows that $%
u_{r}^{2}<\ell _{v}$ for each case contained in Table 2, hence $r^{2}<v$,
and so all the cases in Table 2 are ruled out in his paper.

Our aim is to transfer Alavi's argument in order to rule out the possibility
for $G_{x}$ to not be a parabolic subgroup of $G$. Clearly, the first three
columns of Table 2 have the same meaning as in our paper, where $v=k^{2}$
for us. The role of $r$ and of Lemma 4 of \cite{A} are played by $r/\lambda
=k+1$ and by our Lemma \ref{PP}(3) respectively. Thus the upper bound $u_{r}$
for $r$ in \cite{A} becomes an upper bound for $r/\lambda $ in our context.
Therefore the inequality $u_{r}^{2}<\ell _{v}$ implies $\left( r/\lambda
\right) ^{2}<k^{2}$ but this is impossible in our context, since $r/\lambda
=k+1$. Thus all the groups listed in Tables 2 of \cite{A} cannot occur.

It is even easier to rule out the groups listed in Tables 3 of \cite{A} as
they are filtered with respect to the property that $v=k^{2}$. Indeed, we
obtain the following admissible cases:

\begin{enumerate}
\item $X\cong G_{2}(3)$, $X_{x}\cong 2^{3}:PSL_{3}(2)$ and $%
k^{2}=3528=\allowbreak 2^{3}3^{2}7^{2}$;

\item $X\cong G_{2}(4)$, $X_{x}\cong PSL_{2}(13)$ and $k^{2}=230400=%
\allowbreak 2^{10}3^{2}5^{2}$.
\end{enumerate}

Then $k+1$ is $43$ or $481$ respectively, but none of these divides the
order of the corresponding $G_{x}$. So, these cases violate Lemma \ref{PP}%
(3) and hence they are ruled out.

Assume that $G_{x}$ is a maximal parabolic subgroup of $G$. Assume that $%
E_{6}(q)$ is not contained in $G$. Then $G$ has a subdegree of order $p^{t}$
(e.g. see \cite{A}, Lemma 3, or \cite{Saxl}, Lemma 2.6). Then $\frac{r}{%
\lambda }\mid p^{t}$ and so $k+1=p^{s}$ for some $s\leq t$. Then $k=p^{s}-1$
and hence $k^{2}=(p^{s}-1)^{2}$. Then $s\leq \zeta _{p}(G)$, where $\zeta
_{p}(G)$ is defined in \cite{KL} (5.2.4), and is determined in Proposition
5.2.17.(i) and Table 5.2.C. If $s=\zeta _{p}(G)$, then $\left( p^{\zeta
_{p}(G)}-1\right) ^{2}\mid \left\vert X\right\vert $. On the other hand, $%
\left\vert X\right\vert $ is listed in \cite{KL}, Table 5.1.B, and hence
none of these groups admits $\left( p^{\zeta _{p}(G)}-1\right) ^{2}$ as a
divisor. Then $s<\zeta _{p}(G)$. Then $G_{x}$ contains a Sylow $u$-subgroup $%
G$, where $u$ is a primitive prime divisor of $p^{\zeta _{p}(G)}-1$, since $%
(p,\zeta _{p}(G))\neq (2,6)$ being $X\ncong G_{2}(2)^{\prime }$. On the
other hand, $G_{x}$ can be obtained by deleting the $i$-th node in the
Dynkin diagram of $X$, and we see that none of these groups is of order
divisible by $u$. Indeed, for instance, if $F_{4}(q)\trianglelefteq G$, $%
q=p^{f}$, then $\zeta _{p}(G)=12f$ and hence $G_{x}$ contains a Sylow $u$%
-subgroup of $G$, where $u$ is a primitive prime divisor of $p^{12f}-1$,
whereas the maximal parabolic subgroups are of type $B_{3}(p^{f})$, $%
C_{3}(p^{f})$ or $A_{1}(p^{f})\times A_{2}(p^{f})$ and none of these is
divisible by $u$. As stressed out in \cite{A}, Remark 1, even in the case $%
E_{6}(q)$, when $G$ contains a graph automorphism or $G_{x}$ is parabolic of
type $1,2$ or $4$, then $G$ has a subdegree of order $p^{t}$, and hence
these cases are excluded by the above argument. For the remaining maximal
parabolic subgroups we may use the same argument as \cite{A} at
pp.1012--1013, with $r/\lambda $ and Lemma \ref{PP}(3) in the role of $r$
and Lemma 4 of \cite{A}, respectively,\ to see that $\left( r/\lambda
\right) ^{2}>v=k^{2}$ is violated. Hence $E_{6}(q)\trianglelefteq G$ cannot
occur and the proof is thus completed.
\end{proof}

\begin{lemma}
\label{Izuzetno}$X$ is not isomorphic to $G_{2}(2)^{\prime }$.
\end{lemma}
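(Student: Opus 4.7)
The plan is to reduce to a finite check inside $G_{2}(2)$, starting from the data of Lemma \ref{IzuzzetnoSimple}: $v=k^{2}=36$, $k=6$, $\lambda\in\{2,3,6\}$, $X\cong G_{2}(2)'\cong PSU_{3}(3)$ and $G\leq\mathrm{Aut}(X)=G_{2}(2)$, so that $|G/X|\leq 2$. Since $X_{x}$ is a large maximal subgroup of $X$ of index $36$ by Lemma \ref{PP}(2), the Atlas forces $X_{x}\cong L_{3}(2)$ (the two $X$-classes of $L_{3}(2)$ are fused in $G_{2}(2)$). Flag-transitivity yields $r=7\lambda$ and $b=42\lambda$, and, assuming first $G=X$, a flag-stabilizer of order $24/\lambda\in\{12,8,4\}$ and a block-stabilizer of order $144/\lambda\in\{72,48,24\}$ for $\lambda=2,3,6$ respectively.

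First I would tabulate the subdegrees of the primitive action of $X$ on the $36$ cosets of $L_{3}(2)$, either by computing the $(L_{3}(2),L_{3}(2))$-double cosets in $PSU_{3}(3)$ via the known maximal subgroups $S_{4}$, $S_{4}'$ and $7{:}3$ of $L_{3}(2)$, or by consulting the Atlas. By Lemma \ref{PP}(3) each nontrivial $X_{x}$-orbit on $\mathcal{P}\setminus\{x\}$ must have length divisible by $k+1=7$; subdegrees failing this divisibility eliminate the corresponding $\lambda$ at once, while the surviving ones pin down the admissible intersection sizes $|B\cap y^{X_{x}}|=|y^{X_{x}}|/7$.

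Second, for each remaining $\lambda$ I would enumerate the subgroups $H\leq X$ (and $H\leq X.2$ if $G>X$) of the prescribed block-stabilizer order and select those admitting an orbit of length exactly $6$ on $\mathcal{P}$. Natural candidates inside $PSU_{3}(3)$ are copies of $S_{4}$ or $SL_{2}(3)$ for $\lambda=6$, of $GL_{2}(3)$ or $2\times S_{4}$ for $\lambda=3$, and of groups involving $A_{4}$ or the normalizer of a Sylow $3$-subgroup for $\lambda=2$. For each candidate $H$, writing $B=x^{H}$, I would test the identity of Lemma \ref{PP}(3) against the orbit list computed in the previous step and, if the test survives, check whether the block orbit $B^{G}$ has size $42\lambda$ and covers every pair of points exactly $\lambda$ times.

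The main obstacle will be the case $\lambda=6$, where the block-stabilizer has order $24$ and $PSU_{3}(3)$ has several classes of subgroups of this order, so the internal combinatorics on the $36$-point set must be tracked with care. The cases $\lambda=2,3$ I expect to dispatch more rapidly from the incompatibility between the orbit structure of a subgroup of order $72$ or $48$ and the intersection pattern dictated by Lemma \ref{PP}(3). In view of the small order $|X|=6048$, any residual case can be resolved by a direct computer verification, as is done elsewhere in the paper.
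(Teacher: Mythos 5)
Your plan follows essentially the same route as the paper: identify $X_{x}\cong L_{3}(2)$ of index $36$ in $PSU_{3}(3)$, derive $r=7\lambda$, $b=42\lambda$ and the stabilizer orders $|X_{x,B}|=24/\lambda$, $|X_{B}|=144/\lambda$, exploit the subdegrees $1,7,7,21$ (resp.\ $1,14,21$) together with Lemma \ref{PP}(3) to fix the intersection numbers $|B\cap\mathcal{O}_{i}|$, and then eliminate the candidate block stabilizers case by case. The paper carries out more of that last step by hand (fixed-point counts of the subgroup $W\leq X_{x,B}$ fixing $B$ pointwise, and normalizer/centralizer arguments forcing $9\mid|G_{x,B}|$ or a Sylow $3$ contradiction) before invoking \cite{GAP} for the final $\lambda=6$ subcase, but your finite verification inside a group of order $6048$ is sound and matches the paper's methodology.
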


\begin{proof}
Suppose that $\mathcal{D}$ is a $2$-$(36,6,\lambda )$ design, where $\lambda
\mid 6$ and $\lambda >1$, admitting a flag-transitive automorphism group $%
X\trianglelefteq G\leq \mathrm{Aut(}X\mathrm{)}$, where $X\cong
G_{2}(2)^{\prime }$. Therefore $PSL_{2}(7)\trianglelefteq G_{x}\leq
PGL_{2}(7)$ by \cite{At}, and $\lambda =2,3$ or $6$.

Assume that $\lambda =2$. Then $\left\vert G_{x,B}\right\vert =12$ and hence
either $G=X$ and $G_{x,B}\cong A_{4}$, or $G\neq X$ and $G_{x,B}\cong A_{4}$%
, $D_{12}$ by \cite{At}. Then $72$ divides $\left\vert G_{B}\right\vert $
and hence $G_{B}\leq \left( U:Z_{8}\right) .Z_{2}$, where $U$ is a Sylow $3$%
-subgroup of $G$. Then $U:Z_{8}\leq G_{B}$ since $36$ divides $\left\vert
G_{B}\cap \left( U:Z_{8}\right) \right\vert $ and since $U/Z(U):Z_{8}$ is a
Frobenius group (for instance, see \cite{Hup}, Satz II.10.12, since $%
G_{2}(2)^{\prime }\cong PSU_{3}(3)$). So $9$ divides $\left\vert
G_{x,B}\right\vert $, a contradiction.

Assume that $\lambda =3$ or $6$. Then $4\mid \left\vert X_{x,B}\right\vert $
as $r=21$ or $63$. If $G\neq X$, then the set of points of $\mathcal{D}$
distinct from $x$ is partitioned into two $G_{x}$-orbits, say $\mathcal{O}%
_{1}$, $\mathcal{O}_{2}$, of length $14$ and $21$ respectively by \cite%
{AtMod}. If $G=X$, then $\mathcal{O}_{1}$ is split into two $X_{x}$-orbits $%
\mathcal{O}_{1j}$, $j=1,2$, each of length $7$, whereas $\mathcal{O}_{2}$ is
also a $X_{x}$-orbit. Hence, $X_{x,y}\cong D_{8}$ for $y\in \mathcal{O}_{2}$.

Let $B$ be any block incident with $x$. Then $\left\vert \mathcal{O}%
_{i}\right\vert =7\left\vert \mathcal{O}_{i}\cap B\right\vert $, where $%
i=1,2 $, by Lemma \ref{PP}(3) and hence $\left\vert \mathcal{O}_{1}\cap
B\right\vert =2$ and $\left\vert \mathcal{O}_{2}\cap B\right\vert =3$. Then $%
X_{x,B}$ fixes $\mathcal{O}_{1}\cap B$ pointwise, since $\left\vert \mathcal{%
O}_{1j}\cap B\right\vert =1$ for each $j=1,2$. Then there is a non trivial
subgroup $W$ of $X_{x,B}$ of index at most $2$, such that $B\subseteq 
\mathrm{Fix(}W\mathrm{)}$, since $4\mid \left\vert X_{x,B}\right\vert $.

If $\lambda =3$, then $r=21$, $b=126$ and $X_{x,B}\cong D_{8}$. Hence, $W$
is isomorphic to $Z_{4},E_{4}$ or to $D_{8}$, and $N_{X_{x}}(W)$ is
isomorphic to $D_{8}$, $S_{4}$ or to $D_{8}$ respectively. The number of
points in $\mathcal{O}_{2}$ fixed by $W$ is given by the well known formula $%
\left\vert N_{X_{x}}(W)\right\vert \left\vert X_{x,y}\cap W^{X}\right\vert
/\left\vert X_{x,y}\right\vert $. Thus, it is easy to see that, $W$ fixes $6$
or $1$ points in $\mathcal{O}_{2}$ according to whether $W$ is or is not
isomorphic to $E_{4}$ respectively, since $X_{x,y}\cong D_{8}$. Therefore, $W\cong
E_{4}$, since $B\subseteq \mathrm{Fix(}W\mathrm{)}$ and $\left\vert \mathcal{%
O}_{2}\cap B\right\vert =3$. Then $W$ lies in the kernel $N$ of the action
of $G_{B}$ on $B$. Also $\mathrm{Fix(}W\mathrm{)}\neq \mathrm{Fix(}X_{x,B}%
\mathrm{)}$ as $X_{x,B}\cong D_{8}$ fixes exactly one point on $\mathcal{O}%
_{2}$, and hence $N\cap X_{B}=W$. Therefore $W\trianglelefteq G_{B}$ and
hence $G_{B}\leq N_{G}(W)$, where $N_{G}(W)$ is isomorphic either to $\left(
Z_{4}\right) ^{2}.S_{3}$ or to $\left( Z_{4}\right) ^{2}.D_{12}$ according
as $G=X$ or $G\neq X$, respectively, by \cite{At}. Note that, $\left[
N_{G}(W):G_{B}\right] =2$, since $\left[ G:N_{G}(W)\right] =63$ and $b=126$.
Thus, $G_{B}\vartriangleleft N_{G}(W)$ and $3$ divides $\left\vert
N_{X_{x}}(W)\cap G_{B}\right\vert $ as $N_{X_{x}}(W)\cong S_{4}$. Hence, $%
G_{x,B}$ contains a Sylow $3$-subgroup of $N_{G}(W)$, since these are cyclic
of order $3$, but this contradicts $[G_{B}:G_{x,B}]=6$.

If $\lambda =6$, then $r=42$, $b=252$ and hence either $X_{x,B}\cong Z_{4}$ or $%
X_{x,B}\cong E_{4}$. Therefore, $W$ is isomorphic to $Z_{2}$, $E_{4}$ or to $%
Z_{4}$. Actually, $W\ncong Z_{4}$ since $Z_{4}$ fixes one point on $\mathcal{%
O}_{2}\cap B$, whereas $B\subseteq \mathrm{Fix(}W\mathrm{)}$ and $\left\vert 
\mathcal{O}_{2}\cap B\right\vert =3$. 

Assume that $W\cong E_{4}$. Then $W=X_{x,B}$ and hence $W=X_{B}\cap N$,
where $N$ is defined above. Therefore $W\trianglelefteq G_{B}$ and hence $%
G_{B}\leq N_{G}(W)$. Moreover, $\left[ N_{G}(W):G_{B}\right] =4$, since $%
\left[ G:N_{G}(W)\right] =63$ and $b=252$. So, $G\cong G_{2}(2)$, $%
N_{G}(W)\cong \left( Z_{4}\right) ^{2}.D_{12}$ and $G_{B}\cong \left(
Z_{4}\right) ^{2}.Z_{3}$ by \cite{At}. Then $G_{B}\vartriangleleft N_{G}(W)$%
, $3$ divides $\left\vert N_{X_{x}}(W)\cap G_{B}\right\vert $ and we reach a
contradiction as above. 

Assume $W\cong Z_{2}$, $\left\vert \mathrm{Fix(}W\mathrm{)}\cap \mathcal{O}%
_{2}\right\vert =5$ and $B\subset \mathrm{Fix(}W\mathrm{)}$. As above, $W$
lies in the kernel $N$ of the action of $G_{B}$ on $B$. Then $\mathrm{Fix(}W%
\mathrm{)}\neq \mathrm{Fix(}X_{x,B}\mathrm{)}$. Indeed, it is true for $%
X_{x,B}\cong Z_{4}$, as any $Z_{4}$ fixes exactly one point on $\mathcal{O}%
_{2}$, and it is still true for $X_{x,B}\cong E_{4}$ as it follows from the
above arguments where it is shown that a subgroup isomorphic to $E_{4}$ cannot fix $B$
pointwise. Consequently, we have that $N\cap X_{B}=W$. Thus $G_{B}\leq
C_{G}(W)$, where $C_{G}(W)$ is isomorphic either to $GU_{2}(3)$ or to $%
\Gamma U_{2}(3)$ according as $G=X$ or $G\neq X$, respectively, by \cite{At}%
. Moreover, $\left[ C_{G}(W):G_{B}\right] =4$, as $b=252$, and hence $%
\left\vert X_{B}\right\vert =24$. Therefore, $X_{B}\cong Z.S_{3}$ or $%
X_{B}\cong SL_{2}(3)$ since $C_{X}(W)\cong \left( Z\circ SL_{2}(3)\right)
.Z_{2}$, where $Z$ \ is the center of $GU_{2}(3)$. Assume the former occurs.
Since $\left[ X_{B}:X_{x,B}\right] $ divides $6$, it follows that $Z\leq
X_{x,B}$. Then $Z$ fixes $B$ pointwise, as $Z$ is normal in $G_{B}$, and
hence $Z\leq N\cap X_{B}=W$. This is impossible, since $W\cong Z_{2}$
whereas $Z\cong Z_{4}$. Thus $X_{B}\cong SL_{2}(3)$. Therefore, either $%
G\cong G_{2}(2)^{\prime }$ and hence $G_{B}\cong SL_{2}(3)$, or $G\cong
G_{2}(2)$ and $G_{B}\cong GL_{2}(3)$, $Z_{4}\circ SL_{2}(3)$, as $\left[
C_{G}(W):G_{B}\right] =4$. However, it is easily seen with the aid of \cite{GAP} that, no $2$-designs occur.
\end{proof}

\bigskip

Before analyzing the remaining case involving $G_{3}(3)$. Recall some useful
facts about the action of this group in the desarguesian plane of order $8$.

\bigskip

Let $G\cong G_{3}(3)$. It is well known that $G\cong $ $P\Gamma L_{2}(8)$
acts on $PG_{2}(8)$ preserving a regular hyperoval, namely a $10$-arc,\
consisting of a nondegenerate conic $\mathcal{C}$ and it nucleus $N$ (see 
\cite{Hirsch}, Section 8). Moreover, $G$ acts primitively on the set $%
\mathcal{S}$ of the $36$ secants to $\mathcal{C}$ by \cite{At}. If $\ell $
is any secant to $\mathcal{C}$ and $\left\{ P_{1},P_{2}\right\} =\ell \cap 
\mathcal{C}$, then $G_{\ell }\cong F_{42}$ and the set $\mathcal{S}(\ell )$
consisting of the $14$ secant lines to $\mathcal{C}$ incident with $P_{1}$
or $P_{2}$ is a $G_{\ell }$-orbit, as $G$ acts $3$-transitively on $\mathcal{%
C}$. The complementary set $\mathcal{S}-\mathcal{S}(\ell )$ consisting of $%
21 $ secants intersecting $\ell $ in a point different from $P_{1},P_{2}$ is
also a $G_{\ell }$-orbit.

By \cite{At}, $G$ contains two conjugacy classes of subgroups of order $3$,
one contained in $G^{\prime }$ the other in $G-G^{\prime }$. If $%
\left\langle \eta \right\rangle $ and $\left\langle \gamma \right\rangle $
are the representatives of such classes, then $C_{G}(\eta )\cong S_{3}$ and $%
C_{G}(\gamma )\cong Z_{3}\times S_{3}$. Moreover, $C_{G}(\eta )=C_{G^{\prime
}}(\eta )$ and $C_{G^{\prime }}(\gamma )\cong S_{3}$. We may choose $\eta $
and $\gamma $ to belong to the same Sylow $3$-subgroup of $G$ in a way that $%
C_{G}(\gamma )=\left\langle \gamma \right\rangle \times C_{G}(\eta )$. Note
that, $C_{G}(\eta )=\left\langle \eta ,\sigma \right\rangle $, where $\sigma 
$ is an involutory elation of $PG_{2}(8)$ with center $C$ not in $\mathcal{C}
$ and axis $a$ tangent to $\mathcal{C}$ (a detailed description of the
collineations of the desarguesian plane can be found in \cite{Hirsch} and in 
\cite{HP}).

Set $K=\left\langle \gamma ,\sigma \right\rangle $. Then $K\cong Z_{6}$ is a
self-normalizing subgroup of $G$ by \cite{At}. Moreover, $K$ fixes exactly
one point $F$ on $\mathcal{C}$, and $\left\langle \gamma \right\rangle $
fixes two further points switched by $\sigma $, say $W$ and $W^{\sigma }$.
Then $C=W^{\sigma }W\cap F_{0}N$, $a=F_{0}N$. The set $\left\{ F,W,W^{\sigma
}\right\} $ is a $C_{G}(\gamma )$-orbit on $\mathcal{C}$. The set $\mathcal{C%
}\mathbf{-}\left\{ F,W,W^{\sigma }\right\} $ is split into two $\left\langle
\gamma \right\rangle $-orbits, $\left\{ P,P^{\gamma },P^{\gamma
^{2}}\right\} $ and $\left\{ P^{\sigma },P^{\sigma \gamma },P^{\sigma \gamma
^{2}}\right\} $, and these are also $\left\langle \gamma ,\eta \right\rangle 
$-orbits. Moreover, $\left\{ P,P^{\gamma },P^{\gamma ^{2}},P^{\sigma
},P^{\sigma \gamma },P^{\sigma \gamma ^{2}}\right\} $ is both a $K$-orbit
and a $C_{G}(\gamma )$-orbit on $\mathcal{C}$.

\begin{lemma}
\label{Uredu}The following hold:

\begin{enumerate}
\item $\left( PP^{\gamma }\right) ^{C_{G}(\gamma )}$ is the unique $%
C_{G}(\gamma )$-orbit on $\mathcal{S}$ of length $6$.

\item $\left( PP^{\gamma }\right) ^{C_{G}(\eta )}$ and $\left( FP^{\gamma
^{i}}\right) ^{C_{G}(\eta )}$, where $i=0,1,2$, are the unique $C_{G}(\eta )$%
-orbits on $\mathcal{S}$ of length $6$.

\item $\left( P^{\gamma }P\right) ^{K}$, $\left( P^{\gamma }P^{\sigma
}\right) ^{K}$, $\left( PW\right) ^{K}$, $\left( P^{\sigma }W\right) ^{K}$,
and $\left( PF\right) ^{K}$ are the unique $K$-orbits on $\mathcal{S}$ of
length $6$.
\end{enumerate}

In particular, $\left( PP^{\gamma }\right) ^{C_{G}(\gamma )}=\left(
PP^{\gamma }\right) ^{C_{G}(\eta )}=\left( P^{\gamma }P\right) ^{K}$.
\end{lemma}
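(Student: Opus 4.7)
The plan is to reduce the problem to a purely combinatorial orbit count on $2$-subsets of the conic $\mathcal{C}$. Since a secant is determined by the pair of points it meets $\mathcal{C}$ in, the map $\ell \mapsto \ell \cap \mathcal{C}$ is an equivariant bijection $\mathcal{S} \to \binom{\mathcal{C}}{2}$, both sides having cardinality $36 = \binom{9}{2}$. With this identification, it suffices to enumerate the orbits of $K$, $C_G(\eta)$, and $C_G(\gamma)$ on $\binom{\mathcal{C}}{2}$, using only the $\mathcal{C}$-actions spelled out in the preamble.

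For $K = \langle \gamma, \sigma \rangle \cong Z_6$, the orbits on $\mathcal{C}$ are $\{F\}$, $\{W, W^\sigma\}$ and $D = \mathcal{C} \setminus \{F, W, W^\sigma\}$, of lengths $1$, $2$, $6$. I sort pairs by which $K$-orbits their two coordinates lie in and refine: pairs with both coordinates in $\{F\} \cup \{W, W^\sigma\}$ give short orbits of lengths $1$ and $2$; pairs in $\{F\} \times D$ give one orbit $(FP)^K$ of length $6$; pairs in $\{W, W^\sigma\} \times D$ split into $(WP)^K$ and $(WP^\sigma)^K$ of length $6$ (separated by whether the $D$-coordinate lies in $P^{\langle \gamma \rangle}$ or $(P^\sigma)^{\langle \gamma \rangle}$, then fused by $\sigma$); and pairs inside $D$ decompose as $(P^\gamma P)^K$, $(P^\gamma P^\sigma)^K$ of length $6$, plus the short orbit $\{\{P^{\gamma^i}, P^{\sigma\gamma^i}\}\}_{i=0,1,2}$ of length $3$. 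The count $1 + 2 + 3 + 5 \cdot 6 = 36$ certifies completeness and produces precisely the five orbits listed in (3).

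For $C_G(\eta) \cong S_3$, I first note that $\eta$ must act as a $3$-cycle on $\{F, W, W^\sigma\}$, because $\gamma$ fixes each of these points and $\sigma$ merely transposes $W$ with $W^\sigma$, yet the preamble records $\{F, W, W^\sigma\}$ as a single $C_G(\gamma)$-orbit. Similarly $\eta$ preserves each of $\{P^{\gamma^i}\}$ and $\{P^{\sigma\gamma^i}\}$ setwise, by the $\langle \gamma, \eta \rangle$-orbit data. The crux of the argument is to rule out $\eta$ fixing these two $3$-subsets pointwise: in that case the relation $\sigma\eta = \eta^{-1}\sigma$ would force $\eta$ to fix all six points of $D$, and $(PP^\gamma)^{C_G(\eta)}$ would reduce to $\{\{P, P^\gamma\}, \{P^\sigma, P^{\sigma\gamma}\}\}$ of length $2$, contradicting (2). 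Hence $\eta$ acts as a $3$-cycle on each of the two $\langle \gamma \rangle$-orbits in $D$; fixing an orientation $P^\eta = P^\gamma$ then determines $(P^\sigma)^\eta = P^{\sigma\gamma^{-1}}$ via $\sigma\eta = \eta^{-1}\sigma$, and a direct tabulation of $\langle \eta \rangle$-orbits on pairs followed by $\sigma$-fusion delivers the four length-$6$ orbits in (2), with the remaining $12$ pairs accounted for by one orbit of length $3$ within $\{F, W, W^\sigma\}$ and three orbits of length $3$ in $D$.

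Finally, for $C_G(\gamma) = \langle \gamma \rangle \times C_G(\eta)$, each $C_G(\gamma)$-orbit is a $\gamma$-orbit of $C_G(\eta)$-orbits. Here $\gamma$ cycles the three orbits $(FP^{\gamma^i})^{C_G(\eta)}$, $i = 0, 1, 2$, merging them into one orbit of length $18$, whereas $\gamma$ stabilises $(PP^\gamma)^{C_G(\eta)}$ setwise (it maps $\{P^{\gamma^j}, P^{\gamma^{j+1}}\}$ to $\{P^{\gamma^{j+1}}, P^{\gamma^{j+2}}\}$, already in the orbit), giving the unique length-$6$ orbit in (1); the remaining length-$3$ orbit within $\{F, W, W^\sigma\}$ is untouched by $\gamma$, and the three mixed length-$3$ orbits in $D$ are cycled by $\gamma$ into a single orbit of length $9$, yielding the check $3 + 18 + 6 + 9 = 36$. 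The three orbits at the end coincide because $C_G(\eta) \leq C_G(\gamma)$ and $K \leq C_G(\gamma)$ (the latter via $\sigma \in C_G(\eta)$), each contains $\{P, P^\gamma\}$, and all three have length $6$. The one delicate point is ruling out the fixed-pointwise alternative for $\eta$ on $D$; everything else is orbit bookkeeping.
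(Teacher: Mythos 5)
Your reduction of $\mathcal{S}$ to the pair set $\binom{\mathcal{C}}{2}$ and the resulting orbit tabulations are, in substance, the same computation the paper performs (the paper also names secants by their two conic points; it merely works in the opposite order, establishing (1) first by a divisibility argument, deducing (2) from (1) via $C_{G}(\eta)\trianglelefteq C_{G}(\gamma)$, and checking (3) directly, whereas you build up from $K$ and $\langle\eta,\sigma\rangle$ and then fuse under $\gamma$). All of your counts ($1+2+3+5\cdot 6=36$ for $K$; one orbit of length $3$, three of length $3$ and four of length $6$ for $C_{G}(\eta)$; $3+18+6+9=36$ for $C_{G}(\gamma)$) agree with the paper's, and your closing argument for the coincidence of the three length-$6$ orbits through $\{P,P^{\gamma}\}$ is fine.

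There is, however, one genuine flaw, precisely at the step you flag as delicate: you exclude the possibility that $\eta$ fixes the six points of $D=\mathcal{C}\setminus\{F,W,W^{\sigma}\}$ pointwise on the grounds that $(PP^{\gamma})^{C_{G}(\eta)}$ would then have length $2$, ``contradicting (2)'' --- but (2) is exactly the assertion you are in the middle of proving, so it cannot be invoked; as written the argument is circular. The case does need excluding, and the correct reason is the one the paper itself records and uses: $\langle\eta\rangle$ acts semiregularly on $\mathcal{C}$. Indeed, an element of order $3$ in $G'\cong PSL_{2}(8)$ has order dividing $q+1=9$ and therefore fixes no point of $\mathcal{C}\cong PG_{1}(8)$; alternatively, no nonidentity element of $G$ fixes more than three points of $\mathcal{C}$, so an element of order $3$ fixing all six points of $D$ would be the identity. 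Either one-line substitution repairs the step, after which your proof of (2) --- and hence of (1), which you derive from (2) --- is complete.
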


\begin{proof}
Since $\left\{ P,P^{\gamma },P^{\gamma ^{2}}\right\} $ and $\left\{
P^{\sigma },P^{\sigma \gamma },P^{\sigma \gamma ^{2}}\right\} $ are $%
\left\langle \gamma ,\eta \right\rangle $-orbits, it follows that $%
\left\vert \left( PP^{\gamma }\right) ^{C_{G}(\gamma )}\right\vert \mid 6$.
On the other hand $\left\vert \left( PP^{\gamma }\right) ^{K}\right\vert =6$
and $\left( PP^{\gamma }\right) ^{K}\subseteq \left( PP^{\gamma }\right)
^{C_{G}(\gamma )}$. Thus $\left( PP^{\gamma }\right) ^{C_{G}(\gamma )}$ is a 
$C_{G}(\gamma )$-orbit on $\mathcal{S}$ of length $6$.

Since $\left\{ F,W,W^{\sigma }\right\} $ is a $C_{G}(\gamma )$-orbit on $%
\mathcal{C}$. It follows that $\left\vert \left( FW\right) ^{C_{G}(\gamma
)}\right\vert =3$. Moreover, $\left\vert \left( FP\right) ^{K}\right\vert =6$
and hence $\left\vert \left( FP\right) ^{C_{G}(\gamma )}\right\vert =18$, as 
$\left\{ P,P^{\gamma },P^{\gamma ^{2}},P^{\sigma },P^{\sigma \gamma
},P^{\sigma \gamma ^{2}}\right\} $ is both a $K$-orbit and a $C_{G}(\gamma )$%
-orbit on $\mathcal{C}$.

Since $\sigma $ fixes $PP^{\sigma }$, it follows that $\left( PP^{\sigma
}\right) ^{C_{G}(\gamma )}$ is of odd length. Assume that $\left\vert \left(
PP^{\sigma }\right) ^{C_{G}(\gamma )}\right\vert =3$. Then there is an
element $\vartheta $ in $\left\langle \gamma ,\eta \right\rangle $
preserving $PP^{\sigma }$ and hence fixing both $P$ and $P^{\sigma }$. Then $%
\vartheta $ fixes $\mathcal{C}\mathbf{-}\left\{ F,W,W^{\sigma }\right\} $
pointwise, since $\left\langle \gamma ,\eta \right\rangle $ is an elementary
abelian group of order $9$ acting transitively both on $\left\{ P,P^{\gamma
},P^{\gamma ^{2}}\right\} $ and on $\left\{ P^{\sigma },P^{\sigma \gamma
},P^{\sigma \gamma ^{2}}\right\} $. However, this is impossible. Hence $%
\left\vert \left( PP^{\sigma }\right) ^{C_{G}(\gamma )}\right\vert =9$. As $%
\left( FW\right) ^{C_{G}(\gamma )}\cup \left( FP\right) ^{C_{G}(\gamma
)}\cup \left( PP^{\sigma }\right) ^{C_{G}(\gamma )}$ covers $\mathcal{C}%
-\left( PP^{\gamma }\right) ^{C_{G}(\gamma )}$, the assertion (1) follows.

Since $C_{G}(\eta )\trianglelefteq C_{G}(\gamma )$, each $C_{G}(\eta )$%
-orbit on $\mathcal{S}$ of length $6$ lies either in $\left( PP^{\gamma
}\right) ^{C_{G}(\gamma )}$ or in $\left( FP\right) ^{C_{G}(\gamma )}$.
Since $C_{G}(\eta )=C_{G^{\prime }}(\eta )$, and $\left\langle \eta
\right\rangle $ acts semiregularly on $\mathcal{C}$, it results that $%
\left\langle \eta \right\rangle $ does not fix secants to $\mathcal{C}$. On
the other hand, since $C_{G}(\gamma )_{PP^{\gamma }}\cong Z_{3}$, $%
C_{G}(\gamma )_{PP^{\gamma }}\cap G^{\prime }=1$ and $C_{G}(\gamma )_{FP}=1$%
, it follows that $\left( PP^{\gamma }\right) ^{C_{G}(\eta )}=\left(
PP^{\gamma }\right) ^{C_{G}(\gamma )}$ and that $\left( FP\right)
^{C_{G}(\gamma )}$ is split into three $C_{G}(\eta )$-orbits each of length $%
6$, namely, $\left( FP^{\gamma ^{i}}\right) ^{C_{G}(\eta )}$ where $i=0,1,2$.

Finally, it is easy to check that $W^{\sigma }W$, $WF$, $P^{\sigma
}P$, $PF$, $PW$, $P^{\sigma }W$, $P^{\gamma }P$ and $P^{\gamma }P^{\sigma }$
are representatives of all $K$-orbits on $\mathcal{S}$ and these have
lengths $1$, $2$, $3$, $6$, $6$, $6$, $6$, $6$ respectively. Thus (3) holds.
\end{proof}

\begin{example}
\label{Ex1}Let $\ell _{1}=PP^{\gamma }$, $\ell _{2}=P^{\gamma }P^{\sigma }$, 
$\ell _{3}=PW$ and $\ell _{4}=P^{\sigma }W$, and let $B_{1}=\ell
_{1}^{C_{G}(\gamma )}$ and $B_{i}=\ell _{i}^{K}$ for $i=2,3,4$. Then the
following hold:

\begin{enumerate}
\item $\mathcal{D}_{1}=(\mathcal{S},B_{1}^{G})$ is a $2$-$(36,6,2)$ design
admitting $G$ as the full flag-transitive automorphism group. Moreover, $%
G^{\prime }$ acts flag-transitively on $\mathcal{D}_{1}$.

\item $\mathcal{D}_{i}=(\mathcal{S},G_{i}^{G})$, where $i=2,3,4$, is a $2$-$%
(36,6,6)$ design admitting $G$ as the unique flag-transitive automorphism
group.

\item $\mathcal{D}_{2}$, $\mathcal{D}_{3}$ and $\mathcal{D}_{4}$, are
pairwise non isomorphic.
\end{enumerate}
\end{example}

\begin{proof}
Let $Z$ denote $C_{G}(\gamma )$ for $i=1$ and $K$ for $i>1$. Clearly, $Z\leq
G_{B_{i}}$. Assume that $Z\neq G_{B_{i}}$. Then $1\neq G_{\ell
_{i},B_{i}}\leq F_{42}$ and $G_{B_{i}}$ does not contain elements of order $7
$, as $\left\vert B_{i}\right\vert =6$ and as the unique element of $G$
fixing more than three points on $\mathcal{C}$ is the identity. Thus $%
G_{\ell _{i},B_{i}}\leq Z_{6}$. Suppose that $\left\vert G_{\ell
_{i},B_{i}}\right\vert $ is even. Then either $36\mid \left\vert
G_{B_{1}}\right\vert $, or $12\mid \left\vert G_{B_{i}}\right\vert $ for $i>1
$. In the former case we obtain $G=G_{B_{1}}$ by \cite{At}, but this is
impossible. Hence $i>1$ and either $G_{B_{i}}\cong A_{4}$ or $G_{B_{i}}$
contains a Sylow $2$-subgroup of $G$. Both are ruled out. Indeed, the former
cannot occur since $Z_{6}\cong K\leq G_{B_{i}}$ but $A_{4}$ does not contain
such groups. In the latter case $G_{B_{i}}$ contains an involution $\alpha $
fixing $\ell _{i}\cap B_{i}$ pointwise. However, this is impossible since
the involutions are elations of $PG_{2}(8)$ and their unique fixed point is
the tangency point of their axis, whereas $\ell _{i}$ is a secant to $\mathcal{%
C}$. Thus $G_{\ell _{i},B_{i}}\cong Z_{3}$ and hence $\left\vert
G_{B_{i}}\right\vert =18$. Therefore, $G_{B_{i}}=C_{G}(\gamma )$. This is clear
for $i=1$, whereas, for $i>1$ it follows from $Z_{6}\cong K\leq G_{B_{i}}$
and $K\cap G_{\ell _{i},B_{i}}=1$. However, $G_{B_{1}}=C_{G}(\gamma )$
contradicts the assumption. Thus $i>1$ and hence $B_{i}=B_{1}$ by Lemma \ref%
{Uredu}(1), but we still obtain a contradiction since $B_{1}$ is also a $K$%
-orbit by Lemma \ref{Uredu}(3). Thus $G_{B_{i}}=Z$ for each $i=1,2,3,4$.
Therefore, by \cite{Demb}, 1.2.6, $\mathcal{D}_{i}=(\mathcal{S},B_{i}^{G})$
is a flag-transitive tactical configuration with parameters $%
(v,b,k,r)=(36,84,6,14)$ or $(36,252,6,42)$ according as $i=1$ or $i=2,3,4$
respectively.

In order to prove that $\mathcal{D}_{i}$ is a $2$-design with $\lambda =2$
for $i=1$ and $\lambda =6$ for $i>1$, bearing in mind that $G$ acts
transitively on $\mathcal{S}$, and for any $\ell \in \mathcal{S}$ the group $%
G_{\ell }$ acts transitively both on $\mathcal{S}(\ell )$ and on $\mathcal{S}%
-\mathcal{S}(\ell )$, it is enough to prove that there are precisely $%
\lambda $ elements of $B^{G}$ containing $\ell _{i}$ and any $m_{i}\in
B_{i}-\left\{ \ell _{i}\right\} $, $i$ fixed.

\begin{enumerate}
\item[(i).] $\mathcal{D}_{1}=(\mathcal{S},B_{1}^{G})$\textbf{\ is a }$2$%
\textbf{-}$(36,6,2)$\textbf{\ design admitting }$G$\textbf{\ as the full
flag-transitive automorphism group of }$\mathrm{\mathcal{D}}_{1}$\textbf{.
Moreover, }$G^{\prime }$\textbf{\ acts flag-transitively on }$\mathrm{%
\mathcal{D}}_{1}$\textbf{.}
\end{enumerate}

Let $m_{1}\in B_{1}$. Assume that $\ell _{1}\cap m_{1}\in \mathcal{C}$. Then 
$m_{1}\in \mathcal{S}(\ell _{1})$ and hence $\left\vert m_{1}^{G_{\ell
_{1}}}\right\vert =14$. Clearly $\left\vert B_{1}^{G_{\ell _{1}}}\right\vert
=14$ as $G_{\ell _{1},B_{1}}\cong Z_{3}$. Moreover, $\left\vert B_{1}\cap
m_{1}^{G_{\ell _{1}}}\right\vert =2$ and hence $\left( m_{1}^{G_{\ell
_{1}}},B_{1}^{G_{\ell _{1}}}\right) $ is a tactical configuration with
parameters $(v_{1},b_{1},k_{1},r_{1})=(14,14,2,2)$ by \cite{Demb}, 1.2.6.
Hence, the number of secants in $B_{1}^{G_{\ell _{1}}}$ containing both $%
\ell _{1}$ and $m_{1}$ is $2$. Then the number of secants in $B_{1}^{G}$
containing both $\ell _{1}$ and $m_{1}$ is $2$, as $\mathcal{D}_{1}$ is a
flag-transitive tactical configuration.

If $\ell _{1}\cap m_{1}\notin \mathcal{C}$. Then $m_{1}\in \mathcal{S}-%
\mathcal{S}(\ell _{1})$ and hence $\left\vert m_{1}^{G_{\ell
_{1}}}\right\vert =21$. Moreover, $\left\vert B_{1}^{G_{\ell
_{1}}}\right\vert =14$ and $\left\vert B_{1}\cap m_{1}^{G_{\ell
_{1}}}\right\vert =3$. Indeed, $B_{1}\cap m_{1}^{G_{\ell
_{1}}}=m_{1}^{\left\langle \gamma \right\rangle }$ and hence $\left(
m_{1}^{G_{\ell _{1}}},B_{1}^{G_{\ell _{1}}}\right) $ is a tactical
configuration with parameters $(v_{1},b_{1},k_{1},r_{1})=(21,14,3,2)$.
Hence, the number of elements in $B^{G}$ containing both $\ell _{1}$ and $%
m_{1}$ is $2$, as $\mathcal{D}_{1}$ is a flag-transitive tactical
configuration. Thus, there are precisely $2$ elements of $B^{G}$ containing
both $\ell _{1}$ and $m_{1}$ regardless $\ell _{1}\cap m_{1}$ lies or does
not lie in $\mathcal{C}$. Therefore, $\mathcal{D}_{1}=(\mathcal{S},B_{1}^{G})
$ is a $2$-$(36,6,2)$ design admitting $G$ as a flag-transitive automorphism
group.

Note that, $\mathrm{Aut(\mathcal{D}}_{1}\mathrm{)}=G$ as a consequence of
the O'Nan-Scott Theorem (e.g. see \cite{DM}, Theorem 4.1A), since $%
v=2^{2}\cdot 3^{2}$, $G=\mathrm{Aut(}G\mathrm{)}$ and $G\leq $ $\mathrm{Aut(%
\mathcal{D}}_{1}\mathrm{)}$. Thus $G$ is the full flag-transitive
automorphism group of $\mathrm{\mathcal{D}}_{1}$.

Since $r=14$, $\left( G^{\prime }\right) _{\ell }\cong D_{14}$, and $\left(
G^{\prime }\right) _{\ell ,B_{1}}\leq G_{\ell ,B_{1}}\cong Z_{3}$, it
follows that $\left( G^{\prime }\right) _{\ell ,B_{1}}=1$. Therefore, $\left[
\left( G^{\prime }\right) _{\ell }:\left( G^{\prime }\right) _{\ell ,B_{1}}%
\right] =14$ and hence $G^{\prime }\cong PSL_{2}(8)$ acts flag-transitively
on $\mathrm{\mathcal{D}}_{1}$.

\begin{enumerate}
\item[(ii).] $\mathcal{D}_{2}=(\mathcal{S},B_{2}^{G})$\textbf{\ is a }$2$%
\textbf{-}$(36,6,6)$\textbf{\ design admitting }$G$\textbf{\ as the unique
flag-transitive automorphism group.}
\end{enumerate}

Let $m_{2}\in B_{2}$. Then $\left( m_{2}^{G_{\ell _{2}}},B_{2}^{G_{\ell
_{2}}}\right) $ is a tactical configuration with parameters $%
(v_{2},b_{2},k_{2},r_{2})$ equal either to $(14,42,2,6)$ or to $(21,42,3,6)$
according as $\ell _{2}\cap m_{2}$ lies or does not lie in $\mathcal{C}$
respectively. Therefore, $\mathcal{D}_{2}$ is a $2$-$(36,6,6)$ design admitting $G$ as a
flag-transitive automorphism group.

Arguing as in (i), we see that $G$ is the full flag-transitive automorphism
group of $\mathrm{\mathcal{D}}_{2}$. Let $H$ be the minimal flag-transitive
automorphism group of $\mathrm{\mathcal{D}}_{2}$. Then $H\leq G$ and hence $%
H=G$ by \cite{At}, since $2^{3}\cdot 3^{3}\cdot 7\mid \left[ G:G_{\ell
_{2},B_{2}}\right] $. Thus $G$ is the unique flag-transitive automorphism
group of $\mathcal{D}_{2}$.

\begin{enumerate}
\item[(iii).] $\mathcal{D}_{i}=(\mathcal{S},B_{i}^{G})$\textbf{, }$i=3,4$%
\textbf{, is a }$2$\textbf{-}$(36,6,6)$\textbf{\ design admitting }$G$%
\textbf{\ as the unique flag-transitive automorphism group.}
\end{enumerate}

Let $m_{3}\in B_{3}$ and suppose that $\ell _{3}\cap m_{3}\in \mathcal{C}$.
Since $G$ is $3$-transitive on $\mathcal{C}$, we may assume that $\ell _{3}\cap m_{3}=\left\{
P\right\} $. Then $G_{\ell _{3},m_{3}}$ fixes the vertices of the triangle
inscribed in $\mathcal{C}$ having $\ell _{3},m_{3}$ as two of its three
sides. Hence $G_{\ell _{3},m_{3}}\cong Z_{3}$ since $G$ is $3$-transitive on 
$\mathcal{C}$. Thus, $\left\vert m_{3}^{G_{\ell _{3}}}\right\vert =14$.
Since $G_{B_{1}}$ acts regularly on $B_{1}$ it follows that $\left\vert
B^{G_{\ell _{3}}}\right\vert =42$. Finally, $B$ contains exactly $3$ lines
of $m_{3}^{G_{\ell _{3}}}$ including $\ell _{3}$. Indeed, $G_{\ell _{3}}$
contains a cyclic group of order $7$ acting regularly on $\mathcal{C}-\ell
_{3}$. Thus $\left\vert B\cap m_{3}^{G_{\ell _{3}}}\right\vert =2$ and hence 
$\left( m_{3}^{G_{\ell _{3}}},B^{G_{\ell _{3}}}\right) $ is a tactical
configuration with parameters $(v_{3},b_{3},k_{3},r_{3})=(14,42,2,6)$. Thus
the number of blocks containing both $\ell _{3}$ and $m_{3}$ is $6$ as $%
\mathcal{D}_{3}$ is a flag-transitive tactical configuration.

Suppose that $\ell _{3}\cap m_{3}\notin \mathcal{C}$. We may assume that $%
W\in \ell _{3}$ and $W^{\sigma }\in m_{3}$. Then $G_{\ell _{3},m_{3}}$ is
generated by the unique elation of $PG_{2}(8)$ lying in $G$ and with center $%
\ell _{3}\cap m_{3}$. Thus $G_{\ell _{3},m_{3}}\cong Z_{2}$ and hence $%
\left\vert m_{3}^{G_{\ell _{3}}}\right\vert =21$. As above $\left\vert
B^{G_{\ell _{3}}}\right\vert =42$. Also $\left\vert B\cap m_{3}^{G_{\ell
_{3}}}\right\vert =3$. Indeed $G_{\ell _{3},W^{\sigma }}\cong Z_{6}$ acts
regularly on $\mathcal{C}-(\ell _{3}\cup \left\{ W^{\sigma }\right\} )$.
Therefore $\left( m_{3}^{G_{\ell _{3}}},B^{G_{\ell _{3}}}\right) $ is a
tactical configuration with parameters $(v_{3}^{\prime },b_{3}^{\prime
},k_{3}^{\prime },r_{3}^{\prime })=(21,42,3,6)$. Thus the number of blocks
containing both $\ell _{3}$ and $m_{3}$ is $6$ as $\mathcal{D}_{3}$ is a
flag-transitive tactical configuration. Therefore, $\mathcal{D}_{3}=(%
\mathcal{S},B_{2}^{G})$ is a $2$-$(36,6,6)$ design admitting $G$ as a
flag-transitive automorphism group. Arguing as in (i) and (ii), we see that $G
$ is the unique flag-transitive automorphism group of $\mathrm{\mathcal{D}}_{3}$%
. The statement (iii) for $\mathrm{\mathcal{D}}_{4}$ is proven similarly.

\begin{enumerate}
\item[(iv).] $\mathcal{D}_{2}$\textbf{, }$\mathcal{D}_{3}$\textbf{\ and }$%
\mathcal{D}_{4}$\textbf{, are pairwise non isomorphic.}
\end{enumerate}

Since $G_{B_{i}}=K$ is self-normalizing in $G$, where $i=1,2,3$, it follows
that $B_{i}$ is the unique block of $\mathcal{D}_{i}$ preserved by $G_{B_{i}}
$. Clearly,\ $\mathcal{D}_{2}\ncong \mathcal{D}_{3}$ and $\mathcal{D}%
_{2}\ncong \mathcal{D}_{4}$, since none of the $6$ secants lying in $B_{2}$
contains a point\ fixed by $\left\langle \gamma \right\rangle $, whereas $%
B_{3}$ and $B_{4}$ do.

Suppose that $\Phi $ is an isomorphism from $\mathcal{D}_{3}$ onto $\mathcal{%
D}_{4}$. Since $G$ acts point-transitively on $\mathcal{D}_{i}$, $i=3,4$, we
may assume that $\Phi $ fixes $F$. Also $G^{\Phi }=G$, since $G$ is the full
flag-transitive automorphism group of $\mathcal{D}_{i}$. Then $\left[ \Phi ,G%
\right] =1$, since $\mathrm{Aut(}G\mathrm{)}=G$, and hence $\Phi =1$ as $%
\Phi $ fixes $F$. Then $\mathcal{D}_{3}=\mathcal{D}_{4}$. Then there is $%
\delta \in G$ such that $B_{1}^{\delta }=B_{2}$. Hence $\delta \in
N_{G}(G_{B_{3}})$, as $G_{B_{3}}=G_{B_{4}}=K$. Then $\delta \in G_{B_{3}}$,
since $K$ is self-normalizing in $G$, and hence $B_{3}=B_{4}$, a
contradiction. Thus $\mathcal{D}_{2}\ncong \mathcal{D}_{3}$.
\end{proof}

\bigskip

\begin{theorem}
\label{compl}The following hold:

\begin{enumerate}
\item If $\mathcal{D}$ is a $2$-$(36^{2},6,\lambda )$ design, with $\lambda
\mid 6$, admitting $G$ as a flag-transitive automorphism group, then either $%
\lambda =2$ and $\mathcal{D}$ is isomorphic to $\mathcal{D}_{1}$, or $%
\lambda =6$ and $\mathcal{D}$ is isomorphic to one of the $\mathcal{D}_{i}$,
where $i=2$, $3$ or $4$.

\item If $\mathcal{D}$ is a $2$-$(36^{2},6,2)$ design admitting $G^{\prime }$ as a
flag-transitive automorphism group, then $\mathcal{D}$ is isomorphic to $%
\mathcal{D}_{1}$.
\end{enumerate}
\end{theorem}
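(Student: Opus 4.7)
The plan is to classify the flag-transitive $2$-$(36,6,\lambda )$ designs admitting $G\cong {}^{2}G_{2}(3)$ by enumerating the possible block stabilizers $G_{B}$ together with their orbits of length $k=6$ on the point set $\mathcal{S}$. Flag-transitivity yields $|G_{B}|=k|G_{x,B}|=6|G_{x}|/(7\lambda )$; since $|G_{x}|=42$ for $G\cong {}^{2}G_{2}(3)$ and $|G_{x}^{\prime }|=14$ for $G^{\prime }\cong \PSL _{2}(8)$, this gives $|G_{B}|=36/\lambda $ and $|G_{B}^{\prime }|=12/\lambda $ respectively. Integrality of $|G_{x,B}|$ excludes $\lambda =3,6$ when the acting group is $G^{\prime }$, and $\lambda =1$ is excluded as no affine plane of order $6$ exists. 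Hence the cases to analyse are $|G_{B}|\in \{18,12,6\}$ according to $\lambda \in \{2,3,6\}$ in $G$, together with $|G_{B}^{\prime }|=6$ for $\lambda =2$ in $G^{\prime }$.

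For each admissible order I would enumerate the $G$-conjugacy classes of subgroups of $G$ of that order, using Dickson's classification of subgroups of $\PSL _{2}(8)$ extended by the outer $Z_{3}$ (equivalently, the explicit subgroup information collected just before Example \ref{Ex1}): for order $18$ the relevant class is $C_{G}(\gamma )\cong Z_{3}\times S_{3}$; for order $6$, either $K\cong Z_{6}$ or $C_{G}(\eta )\cong S_{3}$; for order $12$, the list reduces by Dickson to $A_{4}\leq G^{\prime }$ and its extensions by the outer $Z_{3}$. For each candidate $H=G_{B}$ I would list the $H$-orbits of length $6$ on $\mathcal{S}$ (Lemma \ref{Uredu} already provides this for orders $18$ and $6$), and for each such orbit $B$ I would verify the $2$-design condition by the tactical-configuration count of Example \ref{Ex1}(i)--(iii): by \cite{Demb}, $1.2.6$, the number of blocks through a pair $\{\ell _{0},m\}$ reduces to $|B\cap m^{G_{\ell _{0}}}|$ computed over the nontrivial subdegrees $14$ and $21$ of $G_{\ell _{0}}$ on $\mathcal{S}$. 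For $\lambda =2$, Lemma \ref{Uredu}(1) isolates the unique candidate $B=B_{1}$, giving $\mathcal{D}_{1}$. For $\lambda =6$, the five $K$-orbits listed in Lemma \ref{Uredu}(3) reduce to three non-isomorphic designs: the orbit $(P^{\gamma }P)^{K}$ coincides with $B_{1}$ and has the strictly larger stabilizer $C_{G}(\gamma )\neq K$, so is inadmissible here; the remaining four orbits yield $\mathcal{D}_{2},\mathcal{D}_{3},\mathcal{D}_{4}$, with non-isomorphism already established in Example \ref{Ex1}(iv).

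The main obstacle is ruling out $\lambda =3$, which does not appear in Example \ref{Ex1}. Here $|G_{B}|=12$, and one must show that no order-$12$ subgroup of $\PGaL _{2}(8)$ admits a length-$6$ orbit on $\mathcal{S}$ whose $G$-orbit is a $2$-$(36,6,3)$ design. For each such subgroup class I would run the same tactical-configuration count; the rigid subdegrees $\{1,14,21\}$ together with the divisibility constraint $7\mid |y^{G_{\ell _{0}}}|$ of Lemma \ref{PP}(3) force $|B\cap m^{G_{\ell _{0}}}|\in \{2,3\}$, and the resulting arithmetic should be incompatible with $r=21$ or with the pairwise coverage $\lambda =3$; any residual subcases can be closed by direct GAP computation.

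Finally, Part $(2)$ follows by running the same analysis for $G^{\prime }\cong \PSL _{2}(8)$ with $\lambda =2$: $|G_{B}^{\prime }|=6$, so $G_{B}^{\prime }$ is $S_{3}$ or $Z_{6}$, and the unique length-$6$ orbit coincides with $B_{1}$ (since $C_{G^{\prime }}(\gamma )=C_{G}(\gamma )\cap G^{\prime }$ has order $6$ and preserves $B_{1}$). Since $G^{\prime }$ already acts flag-transitively on $\mathcal{D}_{1}$ by Example \ref{Ex1}(i), we conclude $\mathcal{D}\cong \mathcal{D}_{1}$.
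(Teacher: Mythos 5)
Your overall strategy --- compute $|G_{B}|$ from flag-transitivity, enumerate the conjugacy classes of subgroups of that order, extract their length-$6$ orbits via Lemma \ref{Uredu}, and test the $2$-design condition with the tactical-configuration count of \cite{Demb}, 1.2.6 --- is exactly the route the paper takes, and your arithmetic ($|G_{B}|=36/\lambda$, $|G'_{B}|=12/\lambda$) is correct. However, the two places where you defer or merely assert are precisely where the paper has to work, and as written they are gaps.

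First, the $\lambda =3$ case, which you rightly identify as the main obstacle, is not actually closed. Lemma \ref{PP}(3) alone imposes no contradiction: it only forces $|B\cap \mathcal{S}(\ell )|=2$ and $|B\cap (\mathcal{S}-\mathcal{S}(\ell )-\{\ell \})|=3$ for $\ell \in B$, which sums correctly to $k-1=5$, so ``the resulting arithmetic should be incompatible'' does not follow from the subdegrees. (Also note $A_{4}\nleq G'$: the $A_{4}$'s here are $E_{4}{:}Z_{3}$ with the $Z_{3}$ outer.) The paper's actual argument is geometric: $G_{B}\cong A_{4}$, its $E_{4}$ consists of elations with a common tangent axis $a$ and three distinct centres $C_{\alpha },C_{\beta },C_{\alpha \beta }$ on $a$ permuted by the $Z_{3}$, so the orbit $B$ splits into three pairs of secants, each pair concurrent at one centre, a point \emph{off} $\mathcal{C}$. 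Taking $m$ to be the second secant of $B$ through $C_{\alpha }=\ell \cap m$, one gets $|B\cap m^{G_{\ell }}|=1$, the tactical configuration $(m^{G_{\ell }},B^{G_{\ell }})$ has parameters $(21,21,1,1)$, and exactly one block contains $\{\ell ,m\}$, contradicting $\lambda =3$. Without identifying this structure of the $A_{4}$-orbit (or an explicit computation replacing it), the case does not close.

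Second, part (2) and the $G_{B}\cong S_{3}$ branch of $\lambda =6$. Your claim that $C_{G'}(\gamma )\cap G'=C_{G}(\eta )$ has a \emph{unique} orbit of length $6$ is false: Lemma \ref{Uredu}(2) lists four, namely $(PP^{\gamma })^{C_{G}(\eta )}$ and $(FP^{\gamma ^{i}})^{C_{G}(\eta )}$ for $i=0,1,2$, and the latter three must be excluded. The paper does this by showing each such orbit meets every point of $\mathcal{C}-\{F,W,W^{\sigma }\}$ in exactly one secant, whence $|B\cap m^{J_{\ell }}|=1$ for $m=\ell ^{\sigma }$ and the pair $\{\ell ,m\}$ lies in only one block, forcing $\lambda =1$. (The $Z_{6}$ branch is vacuous since $PSL_{2}(8)$ has no element of order $6$.) Finally, in the $\lambda =6$ case you state that ``the remaining four orbits yield $\mathcal{D}_{2},\mathcal{D}_{3},\mathcal{D}_{4}$'': the orbit $(PF)^{K}$ yields no design at all, since its six secants are concurrent at $F\in \mathcal{C}$ and hence two secants meeting off $\mathcal{C}$ would lie in no common block; it must be discarded explicitly. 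Your tactical count would detect all of these failures if actually run, but the proposal as written asserts the wrong conclusions at exactly these steps.
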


\begin{proof}
Let $\mathcal{D}=(\mathcal{P},\mathcal{B})$ be a $2$-$(36^{2},6,\lambda )$,
with $\lambda \mid 6$, admitting $G$ a flag-transitive automorphism group.
Then $G$ acts point-primitively on $\mathcal{D}$ by Lemma \ref{PP}(1). On
the other hand, $G$ has a unique permutation representation of degree $36$
by \cite{At} and this one is equivalent to that on $\mathcal{S}$, the set of
secants to the nondegenerate conic $\mathcal{C}$ of $PG_{2}(8)$ preserved by 
$G$. Hence, we may identify the point set of $\mathcal{D}$ with $\mathcal{S}$%
. Thus, any block $B$ of $\mathcal{D}$ consists of $6$ secants to $\mathcal{C%
}$.

Assume that $\lambda =2$ and that $G^{\prime }$ acts flag-transitively on $%
\mathcal{D}$. Set $J=G^{\prime }$. Then $\left\vert J_{B}\right\vert =6$ and
hence $J_{B}$ is a $J$-conjugate of $C_{G}(\eta )$ (recall that $C_{G}(\eta
)=C_{J}(\eta )$). Without loss, we may assume that $J_{B}=C_{G}(\eta )$.
Then $B$ is one of the following $C_{G}(\eta )$-orbits $\left( PP^{\gamma
}\right) ^{C_{G}(\eta )}$ and $\left( FP^{\gamma ^{i}}\right) ^{C_{G}(\eta
)} $, where $i=0,1,2$, by Lemma \ref{Uredu}(3).

Assume that $B=\left( FP\right) ^{C_{G}(\eta )}$. Set $\ell =FP$. Since $%
C_{G}(\eta )$ acts transitively on $\left\{ F,W,W^{\sigma }\right\} $, it
follows that $B=\left\{ \ell ,\ell ^{\sigma },\ell ^{\eta },\ell ^{\sigma
\eta },\ell ^{\eta ^{2}},\ell ^{\sigma \eta ^{2}}\right\} $, where $\ell
\cap \ell ^{\sigma }=\left\{ F\right\} $, $\ell ^{\eta }\cap \ell ^{\sigma
\eta }=\left\{ W\right\} $ and $\ell ^{\eta ^{2}}\cap \ell ^{\sigma \eta
^{2}}=\left\{ W^{\sigma }\right\} $. Moreover, as $C_{G}(\eta )$ acts
regularly on $\mathcal{C}-\left\{ F,W,W^{\sigma }\right\} $, any two secants
lying in $B$ do not intersect in $\mathcal{C}-\left\{ F,W,W^{\sigma
}\right\} $. Thus, through any point of $\mathcal{C}-\left\{ F,W,W^{\sigma
}\right\} $ there is exactly one secant to $\mathcal{C}$ lying in $B$ and
incident with the point.

Let $m\in B$, $m\neq \ell $. Assume that $\ell \cap m\in \mathcal{C}$. Then $%
m=\ell ^{\sigma }\in \mathcal{S}(\ell )$ and hence $\left\vert m^{J_{\ell
}}\right\vert =12$ as $\mathcal{S}(\ell )$ is also a $J_{\ell }$-orbit,
being $J_{\ell }\cong D_{14}$. Clearly $\left\vert B^{J_{\ell }}\right\vert
=14$ as $J_{\ell ,B}=1$. If there is $e\in B\cap m^{J_{\ell }}$, with $e\neq
m$, then $e\in \mathcal{S}(\ell )$ and hence $e\cap \ell \in \mathcal{C}$.
Then $e\cap \ell \in \mathcal{C}-\left\{ F,W,W^{\sigma }\right\} $, as $%
e\neq m=\ell ^{\sigma }$, and we obtain a contradiction, since through any
point of $\mathcal{C}-\left\{ F,W,W^{\sigma }\right\} $ there is exactly one
secant to $\mathcal{C}$ lying in $B$ and incident with the point. Thus $%
\left\vert B\cap m^{J_{\ell }}\right\vert =1$ and hence $\left( m^{J_{\ell
}},B^{J_{\ell }}\right) $ is a tactical configuration with parameters $%
(v^{\prime },b^{\prime },k^{\prime },r^{\prime })=(14,14,1,1)$ by \cite{Demb}%
, 1.2.6. Hence the number of elements in $B^{J_{\ell }}$ containing both $%
\ell _{1}$ and $m_{1}$ is $1$. Then the number of $B^{J}$ containing both $%
\ell _{1}$ and $m_{1}$ is $1$, as $\mathcal{D}$ is flag-transitive by our
assumption. However, that is impossible as it contradicts the assumption $%
\lambda =2$. The cases $B=\left( FP^{\gamma ^{i}}\right) ^{C_{G}(\eta )}$,
with $i=1,2$, are similarly ruled out. Then $B=\left( PP^{\gamma }\right)
^{C_{G}(\eta )}$ and hence $B=\left( P^{\gamma }P\right) ^{C_{G}(\gamma )}$
by Lemma \ref{Uredu}. Thus, $\mathcal{D}\cong \mathcal{D}_{1}$ by Example \ref%
{Ex1}(1).

Assume that $\lambda =2$ and that $G$ acts flag-transitively on $\mathcal{D}$%
. Let $\ell \in B$, then $G_{\ell ,B}\cong Z_{3}$ and hence $\left\vert
G_{B}\right\vert =18$. Moreover, $G_{\ell ,B}\cap G^{\prime }=1$ and $%
G_{B}\cap G^{\prime }\cong S_{3}$, since $G^{\prime }\cong PSL_{2}(8)$, and
hence $G_{B}\cong Z_{3}\times S_{3}$. Since the centralizer in $G$ of any
subgroup of order $3$ of $G^{\prime }$ is of order $27$ by \cite{At}, it
follows that $G_{B}=C_{G}(\rho )$ for some element $\rho $ of order $3$
lying in $G-G^{\prime }$. We may assume that $G_{B}=C_{G}(\gamma )$, since
the subgroups of order $3$ of $G$ intersecting $G^{\prime }$ in $1$ lies in
one conjugacy class under $G$ again by \cite{At}. Thus, $B=B_{1}$ by Lemma %
\ref{Uredu}(1) and so $\mathcal{D}\cong \mathcal{D}_{1}$ by Example \ref{Ex1}%
(1).

Assume that $\lambda =3$. Then $G_{\ell ,B}\cong Z_{2}$ and $\left\vert
G_{B}\right\vert =12$, as $k=6$, and hence $G_{B}\cong A_{4}$ by \cite{At}.
Let $\alpha ,\beta ,\delta \in G_{B}$ such that $\left\langle \alpha ,\beta
\right\rangle \cong E_{4}$ and $o(\delta )=3$. Since $G_{B}$ preserves $%
\mathcal{C}$, the group $\left\langle \alpha ,\beta \right\rangle $ consists
of elations of $PG_{2}(8)$ having the same axis $a$ tangent to $\mathcal{C}$
and distinct centers $C_{\alpha }$, $C_{\beta }$ and $C_{\alpha \beta }$.
Furthermore, $\left\langle \delta \right\rangle $ fixes $a$ and permutes
transitively $\left\{ C_{\alpha },C_{\beta },C_{\alpha \beta }\right\} $.
Then the block $B$ consists of two secants incident with $C_{\alpha }$, two
ones incident with $C_{\beta }$ and two ones incident with $C_{\alpha \beta }
$. We may assume that $C_{\alpha }\in \ell $. Let $m\in B-\left\{
\ell \right\} $ such that $C_{\alpha }\in m$. Then $\left( m^{G_{\ell
}},B^{G_{\ell }}\right) $ is a tactical configuration with parameters $%
(v^{\prime \prime },b^{\prime \prime },k^{\prime \prime },r^{\prime \prime
})=(21,21,1,1)$ by \cite{Demb}, 1.2.6. Hence the number of elements in $%
B^{G_{\ell }}$ containing both $\ell $ and $m$ is $1$. Then the number of $%
B^{G}$ containing both $\ell $ and $m$ is $1$, as $\mathcal{D}$ is
flag-transitive. However, that is impossible as it contradicts the
assumption $\lambda =3$.

Assume that $\lambda =6$. Then $\left\vert G_{B}\right\vert =6$. If $%
G_{B}\leq G^{\prime }$, then $G_{B}\cong S_{3}$ and hence is a $G$-conjugate
of $C_{G}(\eta )$ by \cite{At}. Without loss, we may assume that $%
G_{B}=C_{G}(\eta )$. Then $B$ is one of the $C_{G}(\eta )$-orbits $\left(
PP^{\gamma }\right) ^{C_{G}(\eta )}$ and $\left( FP^{\gamma ^{t}}\right)
^{C_{G}(\eta )}$, where $t=0,1,2$, by Lemma \ref{Uredu}(2). If $B=\left(
PP^{\gamma }\right) ^{C_{G}(\eta )}$, then $B=\left( P^{\gamma }P\right)
^{C_{G}(\gamma )}$ again by Lemma \ref{Uredu}, and hence $\mathcal{D}\cong 
\mathcal{D}_{1}$ by Example \ref{Ex1}(1), whereas $\lambda =6$. So, $B\neq
\left( PP^{\gamma }\right) ^{C_{G}(\eta )}$ and hence $B_{t}=\left(
FP^{\gamma ^{t}}\right) ^{C_{G}(\eta )}$ for some $t=0,1,2$.

Let $t=0$ and let $m=\ell ^{\sigma }$. Then $m\cap \ell \in \mathcal{C}$. A
similar argument to that used for the case $\lambda =2$ shows that $\left(
m^{G_{\ell }},B^{G_{\ell }}\right) $ is a tactical configuration with
parameters $(v^{\prime \prime \prime },b^{\prime \prime \prime },k^{\prime
\prime \prime },r^{\prime \prime \prime })=(14,14,1,1)$ by \cite{Demb},
1.2.6, since through any point of $\mathcal{C}-\left\{ F,W,W^{\sigma
}\right\} $ there is exactly one secant to $\mathcal{C}$ lying in $B$ and
incident with the point. Hence the number of elements in $B^{G_{\ell }}$
containing both $\ell $ and $m$ is $1$. Then the number of $B^{G}$
containing both $\ell $ and $m$ is $1$, as $\mathcal{D}$ is flag-transitive.
So $\lambda =1$, but this contradicts our assumption. The cases $t=1,2$ are
excluded similarly.

Assume that $G_{B}\nleq G^{\prime }$. Then $G_{B}\cong Z_{6}$ and hence is a 
$G$-conjugate of $K$. Thus, without loss, we may assume that $G_{B}=K$. Then 
$B$ is one of the following $K$-orbits on $\mathcal{S}$: $\left( P^{\gamma
}P\right) ^{K}$, $\left( P^{\gamma }P^{\sigma }\right) ^{K}$, $\left(
PW\right) ^{K}$, $\left( P^{\sigma }W\right) ^{K}$, and $\left( PF\right)
^{K}$ by Lemma \ref{Uredu}(3). Then $B=\left( P^{\gamma }P\right)
^{K}=\left( P^{\gamma }P\right) ^{C_{G}(\gamma )}$ implies $\mathcal{D}\cong 
\mathcal{D}_{1}$ by Example \ref{Ex1}(1), and we again reach a contradiction
as $\lambda =6$. Thus, $B\neq \left( P^{\gamma }P\right) ^{K}$. Also $B\neq
\left( PF\right) ^{K}$, otherwise all the secants lying in any block of $%
\mathcal{D}$ intersects in a point, whereas for any two secants $s$ and $%
s^{\prime }$ to~$\mathcal{C}$ such that $s\cap s^{\prime }\notin \mathcal{C}$%
, then there are no blocks of $\mathcal{D}$ incident with them. Thus, $B$ is
either $\left( P^{\gamma }P^{\sigma }\right) ^{K}$ or $\left( PW\right) ^{K}$, or $\left( P^{\sigma }W\right) ^{K}$, and we obtain $\mathcal{D}\cong 
\mathcal{D}_{i}$, where $i=2$, $3$ or $4$, respectively, by Example \ref{Ex1}%
(2).
\end{proof}

\bigskip

\begin{proof}[Proof of Theorem \protect\ref{main}]
By Theorem \ref{AffAS}, $Soc(G)$, the socle of $G$, is either an elementary abelian $p$-group
for some prime $p$ or a non abelian simple group.

Assume that the latter occurs. Then $X$ is neither sporadic nor an
alternating group by Lemmas \ref{NoSporadic} and \ref{nemanazimenicno}
respectively. If $X$ is classical, then assertion (1) is immediate, but also
(2b)--(2c) follow by Theorem \ref{compl}, since $PSL_{2}(8)\cong $ $%
^{2}G_{2}(3)^{\prime }$. Finally, if $X$ is isomorphic to the socle a finite
exceptional group of Lie type, then $2$-$(36,6,\lambda )$ design, where $%
\lambda =2,3$ or $6$, and $X\cong $ $^{2}G_{2}(3)^{\prime }$ by Lemmas \ref%
{IzuzzetnoSimple} and \ref{Izuzetno}. Then the assertions (2b)--(2c) follow
again from Theorem \ref{compl}. This completes the proof.
\end{proof}


\begin{thebibliography}{99}
\bibitem{AZ} M. Aigner, G. M. Ziegler, Proofs from THE BOOK, (5th ed.).
Berlin, New York: Springer-Verlag. 2014

\bibitem{glob} S. H. Alavi , M. Bayat, M. Biliotti, A. Daneshkhah, E.
Francot, H. Guan, A. Montinaro, F. Mouseli, P. Rizzo, D. Tian, Y. Wang, X.
Zhan, Y. Zhang, S. Zhou, Y. Zhu, Block designs with flag-transitive
automorphism groups. submitted

\bibitem{A} S. H. Alavi, Flag-Transitive Block Designs and Finite Simple
Exceptional Groups of Lie Type, Graphs and Combinatorics \textbf{36} (2020)
1001--1014.

\bibitem{A1} S. H. Alavi, A note on two families of 2-designs arose from
Suzuki-Tits ovoid, Algebra and Discrete Math. Accepted.

\bibitem{ABD0} S. H. Alavi, M. Bayat, A. Daneshkhah, Symmetric designs
admitting flag-transitive and point-primitive automorphism groups associated
to two dimensional projective special groups, Designs, Codes and
Cryptography \textbf{79} (2016) 337--351.

\bibitem{ABD1} S. H. Alavi, M. Bayat, A. Daneshkhah, Flag-transitive block
designs and unitary groups, Monatshefte f\"{u}r Mathematik \textbf{193}
(2020) 535--553.

\bibitem{ABD2} S. H. Alavi, M. Bayat, A. Daneshkhah, Correction to:
Flag-transitive block designs and unitary groups, Monatshefte f\"{u}r
Mathematik \textbf{195} (2021) 371--376.

\bibitem{ABD} S. H. Alavi, M. Bayat, and A. Daneshkhah, Finite exceptional
groups of Lie type and symmetric designs. Submitted arxiv:1702.01257.

\bibitem{AB} S. H. Alavi, T. C. Burness, Large subgroups of simple groups,
J. Algebra \textbf{421} (2015) 187--233.

\bibitem{BM} M. Biliotti, A. Montinaro, On flag-transitive symmetric designs
of affine type. J. Combin. Des. \textbf{25} (2017) 85--97.

\bibitem{BMR} M. Biliotti, A. Montinaro, P. Rizzo, $2$-$(v,k,\lambda )$
designs, with $(r,\lambda )=1$, admitting a solvable flag-transitive
automorphism group of affine type. J. Combin. Des \textbf{27} (2019)
784--800.

\bibitem{BDD} F. Buekenhout, A. Delandtsheer, J. Doyen, Finite linear spaces
with flag-transitive sporadic groups, Unpublished note.

\bibitem{BDDKLS} F. Buekenhout, A. Delantsheer, J. Doyen, P. B. Kleidman, M.
Liebeck, and J. Saxl, Linear Spaces with flag-transitive automorphism groups%
\textit{,} Geom. Dedicata \textbf{36} (1990) 89-94.

\bibitem{At} J. H. Conway, R. T. Curtis, R. A. Parker, R. A. Wilson, An
Atlas of Finite Groups, Clarendon Press, Oxford, 1985.

\bibitem{Da} D. Davies, Automorphisms of designs, Phd thesis, University of
East Anglia (1987).

\bibitem{De0} A. Delandtsheer, Flag-transitive finite simple groups, Arch.
Math. (Basel) \textbf{47} (1986) 395--400.

\bibitem{De} A. Delandtsheer, Finite Flag-transitive Linear Spaces with
Alternating Socle, pp. 79--88 in Algebraic combinatorics and Applications,
Eds. A. Betten, A Kohnert, R. Laue, A. Wassermann, Proc.of theEuroconference
ALCOMA, held in G\"{o}$\beta $wienstein, Germany, Springer, 1999.

\bibitem{Demb} P. Dembowski, Finite Geometries, Springer, Berlin,
Heidelberg, New York, 1968.

\bibitem{DLPX} A. Deviller, H. Liang, C. E. Praeger, B. Xia,
OnGalg-transitive $2$-$(v,k,2)$ designs, J. Combin. Theory Ser. A \textbf{177%
} (2021) 105309.

\bibitem{DM} J. D. Dixon, B. Mortimer, Permutation Groups. Springer Verlag,
New York (1966).

\bibitem{Hirsch} J. W. P. Hirshfeld, Projective Geometry Over Finite Fields.
Second Edition. Clarendon Press, Oxford, 1998.

\bibitem{HP} D. R. Hughes, F.C. Piper, Projective Planes. Springer, New
York, Berlin (1973).

\bibitem{Hup} B. Huppert, Endliche gruppen I, Die Grundlehren der
Mathematischen Wissenschaften, Band 134, Springer-Verlag, Berlin--New York,
1967.

\bibitem{Kle} P. B. Kleidman, The finite flag-transitive linear spaces with
an exceptional automorphism group, 461 in: Finite geometries and
combinatorial designs (Lincoln, NE, 1987), vol. 111 of Contemp. Math., 462
Amer. Math. Soc., Providence, RI, 1990, pp. 117--136.

\bibitem{Ka} W. M. Kantor, Classification of 2-transitive symmetric designs,
Graphs Combin. \textbf{1} (1985) 165--450.

\bibitem{KL} P. Kleidman, M. Liebeck, The subgroup structure of the finite
classical groups. London Mathematical Society Lecture Note Series, 129.
Cambridge University Press, Cambridge, 1990.

\bibitem{LPS} M. W. Liebeck, C. E. Praeger, J. Saxl, The O'Nan Scott theorem
for finite primitve permutation groups, J. Austral. Math. Soc. \textbf{44}
(1988) 389--396.

\bibitem{LSS} M. W. Liebeck, J. Saxl, and G. M. Seitz, On the overgroups of
irreduciblensubgroups of the finite classical groups, Proc. London Math.
Soc. \textbf{55} (1987) 507--537.

\bibitem{LiebF} M. W. Liebeck, The classification of finite linear spaces
with flag-transitive automorphism groups of affine type, J. Combin. Theory
Ser. A \textbf{84} (1998) 196--235.

\bibitem{MBF} A. Montinaro, M. Biliotti, E. Francot, Classification of $%
2-(v,k,\lambda )$ designs with $(r,\lambda )=1$ and $\lambda >1$, admitting
a non-solvable flag-transitive automorphism group of affine type, J. Algebr. Comb. (2021). https://doi.org/10.1007/s10801-021-01075-1.

\bibitem{Mo1} A. Montinaro, Classification of the $2$-$(k^{2},k,\lambda )$
design, with $\lambda \mid k$, admitting a flag-transitive automorphism
group $G$ of affine type. Submitted.

\bibitem{Mo2} A. Montinaro, Classification of the $2$-$(k^{2},k,\lambda )$
design, with $\lambda \mid k$, admitting a flag-transitive automorphism
group $G$ of almost simple type. Submitted.

\bibitem{Monty} A. Montinaro, Flag-transitive, point-imprimitive symmetric $2$-$(v,k,\lambda)$ designs with $k>\lambda(\lambda-3)/2$. Submitted.

\bibitem{PZ} C. E. Praeger, S. Zhou, Imprimitive flag-transitive symmetric
designs. J. Comb. Theory Ser. A, \textbf{113} (2006) 1381--1395.

\bibitem{Saxl} J. Saxl. On nite linear spaces with almost simple
flag-transitive automorphism groups. J. Combin. Theory Ser. A, \textbf{100}
(2002) 322--348.

\bibitem{W} R. A. Wilson, Maximal subgroups of sporadic groups,
arXiv:1701.02095v2

\bibitem{TZ} D. Tian, S. Zhou, Flag-transitive $2-(v,k,\lambda )$ symmetric
designs with sporadic socle, Journal of Combinatorial Designs \textbf{23}
(2015) 140--150.

\bibitem{AtMod} R. Wilson, P. Walsh, J. Tripp, I. Suleiman, R. Parker, S.
Norton, S. Nickerson, S. Linton, J. Bray, R. Abbott, An Atlas of Finite
Groups Representation- Version 3\textit{,} available at\textit{\ }%
http://brauer.maths.qmul.ac.uk/Atlas/v3/

\bibitem{Zie} P. H. Zieschang, Flag transitive automorphism groups of $2$%
-designs with $(r,\lambda )=1$, Journal of Algebra \textbf{118} (1988)
369--375.

\bibitem{ZD} X. Zhan, S. Ding, Comments on \textquotedblleft Flag-transitive
block designs and unitary groups\textquotedblright , Monatshefte f\"{u}r
Mathematik \textbf{195} (2021) 177--180.

\bibitem{ZZ0} X. Zhan, S. Zhou, Flag-transitive non-symmetric $2$-designs
with $(r,\lambda )=1$ and sporadic socle, Des. Codes Cryptography \textbf{81}
(2016) 481--487.

\bibitem{ZZ1} X. Zhan, S. Zhou, Non-symmetric 2-designs admitting a
two-dimensional projective linear group, Designs, Codes and Cryptography 
\textbf{86} (12) (2018) 2765--2773.

\bibitem{ZZ} S. Zhou, X. Zhan, Flag-transitive automorphism groups of $2$%
-designs with $\lambda \geq (r,\lambda )^{2}$ and an application to
symmetric designs, Ars Math. Contemp. \textbf{14} (2018) 187--195.

\bibitem{ZZ2} Y. Zhang, S. Zhou, Flag-transitive non-symmetric $2$-designs
with $(r,\lambda )=1$ and Exceptional Groups of Lie type, The Electronic
Journal of Combinatorics \textbf{27} (2020) P2.9.

\bibitem{ZW} S. Zhou, Y. Wang, Flag-transitive non-symmetric $2$-designs
with $(r,\lambda )=1$ and alternating socle, Electron. J. Combin. \textbf{22}
(2015) Paper 2.6, 15.

\bibitem{ZGZ} Y. Zhu, H. Guan, S. Zhou, Flag-transitive $2$-$(v,k,\lambda )$
symmetric designs with $(r,\lambda )=1$ and alternating socle, Frontiers of
Mathematics in China \textbf{10} (2015) 1483--1496.

\bibitem{GAP} The GAP Group, GAP \textquotedblright Groups, Algorithms, and
Programming, Version 4.11.0, 2020, available at http://www.gap.system.org
\end{thebibliography}
\end{document}